\theoremstyle{plain} 
\newtheorem{lemma}{Lemma}[section] 
\newtheorem{proposition}{Proposition}[section] 
\newtheorem{theorem}{Theorem}[section] 
\newtheorem{corollary}{Corollary}[section] 
\theoremstyle{definition}
\theoremstyle{remark}
\newtheorem{remark}{Remark}[section]
\numberwithin{equation}{section}
\def\norm#1.#2.{\lVert#1\rVert_{#2}}
\def\Norm#1.#2.{\bigl\lVert#1\bigr\rVert_{#2}}
\def\NOrm#1.#2.{\Bigl\lVert#1\Bigr\rVert_{#2}}
\def\NORm#1.#2.{\biggl\lVert#1\biggr\rVert_{#2}}
\def\NORM#1.#2.{\Biggl\lVert#1\Biggr\rVert_{#2}}
\def\ip#1,#2,{\langle #1,#2\rangle}
\def\Ip#1,#2,{\bigl\langle#1,#2\bigr\rangle}
\def\IP#1,#2,{\Bigl\langle#1,#2\Bigr\rangle}
\def\Abs#1{\bigl\lvert#1\bigr\rvert}
\def\ABs#1{\biggl\lvert#1\biggr\rvert}
\def\XXint#1#2#3{{\setbox0=\hbox{$#1{#2#3}{\int}$}
     \vcenter{\hbox{$#2#3$}}\kern-.5\wd0}}
\def\eqdef{\stackrel{\mathrm{def}}{{}={}}}
\def\c{\textnormal {circle}}
\begin{document}
\title[Circle Discrepancy for checkerboard measures] {Circle discrepancy for checkerboard measures}
 \subjclass[2010]{11K31,11K38}
 \keywords{checkerboard, coloring, discrepancy, circle, arc}

\author[M. Kolountzakis]{{Mihail N. Kolountzakis}}
\address{M.K.: Department of Mathematics, University of Crete, Knossos Ave., GR-714 09, Iraklio, Greece}
\email{kolount@gmail.com}
\thanks{M.K.: Supported by research grant No 3223 from the Univ. of Crete.}

\author[I. Parissis]{Ioannis Parissis}
\address{I.P.: Department of Mathematics and Statistics, P.O.B. 68 (Gustaf H\"allstr\"omin katu 2b), FI-00014, University of Helsinki, Finland.}
\email{ioannis.parissis@gmail.com}
\thanks{I.P.: Research partially supported by CAMGSD-LARSYS through  Funda\c{c}\~{a}o para a Ci\^{e}ncia e Tecnologia (FCT/Portugal), program POCTI/FEDER and partially by the European Union through the ERC Starting Grant `Analytic–probabilistic methods for borderline singular integrals'.}

\begin{abstract} Consider the plane as a union of congruent unit squares in a checkerboard pattern, each square colored black or white in an arbitrary manner. The discrepancy of a curve with respect to a given coloring is the difference of its white length minus its black length, in absolute value. We show that for every radius $t\geq 1$ there exists a full circle of radius either $t$ or $2t$ with discrepancy greater than $c\sqrt{t}$ for some numerical constant $c>0$. We also show that for every $t\geq 1$ there exists a circular arc of radius exactly $t$ with discrepancy greater than $c\sqrt{t}$. Finally we investigate the corresponding problem for more general curves and their interiors. These results answer questions posed by Kolountzakis and Iosevich.
\end{abstract}

\maketitle

\section{Introduction.} \label{s.intro}
In this note we take up the investigation, initiated in \cite{K} and continued in \cite{IK}, concerning the \emph{discrepancy} of various geometrical shapes with respect to non-atomic measures (colorings). In order to discuss the problems we are interested in we need to introduce some notation. As in \cite{K,IK} we divide the Euclidean plane $\mathbb R^2$ into the unit cells 
$$Q_p\eqdef[p_1,p_1+1)\times[p_2,p_2+1),\quad p=(p_1,p_2)\in\mathbb Z^2,$$
and color each one of the cells either black or white. Thus a \emph{checkerboard coloring} $f$ of the plane is a function
$$f:\mathbb R^2\to \{-1,+1\},$$
such that $f$ is constant on each unit cell $Q_p$. Now let $S$ be a simple curve lying in the checkerboard-plane and $f$ be a coloring as before. We define the \emph{discrepancy} of $S$ with respect to the given coloring $f$ to be the difference of the `white' length of $S$ against the `black' length of $S$, in absolute value. In \cite{K} it was proved that for any checkerboard coloring there exist arbitrarily long line segments $I$ with discrepancy at least $c\sqrt{|I|}$, for some numerical constant $c>0$. On the other hand in \cite{IK} the authors proved that for arbitrarily large $R>0$ there exists a circular arc of radius comparable to $R$ which has discrepancy at least $c\sqrt{R}$ for some numerical constant $c>0$. The authors in \cite{IK} also ask whether there is a \emph{full circle} $C$ with large discrepancy. We answer this question in a strong form by showing that for every radius $t\geq 1$ there exists a full circle of radius either $t$ or $2t$ with discrepancy at least $c\sqrt t$. Noting by $C(x,t)$ the circle of radius $x\in\mathbb R^2$ and radius $t>0$ we have:

\begin{theorem}\label{t.fullcircle}
	Let $f$ be a checkerboard coloring of the plane as before and let $t\geq 1$.There exists a $x\in\mathbb R^2$ such that
$$\mbox{either}\quad \ABs{\int_{C(x,t)} f} \geq c t^\frac{1}{2}\quad\mbox{or}\quad \ABs{\int_{C(x,2t)} f} \geq c (2t)^\frac{1}{2},$$
for some numerical constant $c>0$.
\end{theorem}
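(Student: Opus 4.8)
The plan is to run an $L^{2}$ average over the centres $x$, turned by Plancherel into a lower bound for a Fourier integral. The point of allowing the two radii $t$ and $2t$ is that this lets us beat the oscillation of the Bessel function $J_{0}$, which for a single radius can vanish at exactly the frequencies that a malicious colouring concentrates on.

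First I would reduce to a periodic colouring. Fix $t\ge 1$ and a large integer $N$ (to be sent to $\infty$), and let $g$ be the colouring of period $N$ in each variable that agrees with $f$ on $[0,N)^{2}$; since $N\in\mathbb{Z}$, $g$ is again constant on unit cells, and whenever $x$ lies in the box $(2t,N-2t)^{2}$ both circles $C(x,t)$, $C(x,2t)$ stay inside $(0,N)^{2}$, so $\int_{C(x,r)}g=\int_{C(x,r)}f$ for $r\in\{t,2t\}$. Call such $x$ \emph{good}; the good set has measure at least $N^{2}-CtN$, and on its complement one has only the trivial bound $\Abs{\int_{C(x,r)}g}\le 2\pi r\le 4\pi t$. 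Writing $D_{r}:=g*\sigma_{r}$, with $\sigma_{r}$ the arclength measure on the circle of radius $r$ about the origin, so that $D_{r}(x)=\int_{C(x,r)}g$, it then suffices to prove, on the torus $\mathbb{T}_{N}=\mathbb{R}^{2}/N\mathbb{Z}^{2}$,
\begin{equation}
\int_{\mathbb{T}_{N}}\Bigl(\Abs{D_{t}(x)}^{2}+\Abs{D_{2t}(x)}^{2}\Bigr)\,dx\ \ge\ c_{0}\,t\,N^{2}
\tag{$\star$}
\end{equation}
for a universal $c_{0}>0$ and all large $N$: indeed $(\star)$ together with the trivial bound on the bad set forces the average of $\Abs{D_{t}}^{2}+\Abs{D_{2t}}^{2}$ over the good set to be $\ge c_{0}t/2$ once $N$ is large in terms of $t$, hence some good $x_{0}$ has $\max\bigl(\Abs{D_{t}(x_{0})},\Abs{D_{2t}(x_{0})}\bigr)\ge \tfrac12\sqrt{c_{0}t}\ge\tfrac12\sqrt{c_{0}}\,(r/2)^{1/2}$ for the relevant $r\in\{t,2t\}$, and $D_{r}(x_{0})=\int_{C(x_{0},r)}f$ because $x_{0}$ is good — exactly the desired conclusion with $c=\sqrt{c_{0}}/(2\sqrt 2)$.

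Next I would pass to the Fourier side. On $\mathbb{T}_{N}$ Plancherel gives $N^{-2}\int_{\mathbb{T}_{N}}\Abs{D_{r}}^{2}=\sum_{\xi\in N^{-1}\mathbb{Z}^{2}}\Abs{\widehat g(\xi)}^{2}\Abs{\widehat{\sigma_{r}}(\xi)}^{2}$, and a direct computation gives $\widehat{\sigma_{r}}(\xi)=2\pi r\,J_{0}(2\pi r\Abs{\xi})$. Since $g=\bigl(\sum_{p}\epsilon_{p}\delta_{p}\bigr)*\mathbf 1_{[0,1)^{2}}$ with $\epsilon_{p}\in\{-1,1\}$, we have $\widehat g(\xi)=c_{\xi}\,\phi(\xi)$, where $\phi:=\widehat{\mathbf 1_{[0,1)^{2}}}$ and $c_{\xi}$ is $\mathbb{Z}^{2}$-periodic in $\xi$. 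Decomposing $\xi=m+\theta$ with $m\in\mathbb{Z}^{2}$ and $\theta\in[-\tfrac12,\tfrac12)^{2}$, Plancherel applied to $\mathbf 1_{[0,1)}$ in each coordinate gives $\sum_{m\in\mathbb{Z}^{2}}\Abs{\phi(m+\theta)}^{2}=1$ for every $\theta$, whence $\sum_{\theta}\Abs{c_{\theta}}^{2}=\sum_{\xi}\Abs{\widehat g(\xi)}^{2}=N^{-2}\int_{\mathbb{T}_{N}}\Abs{g}^{2}=1$; and since $\Abs{\widehat{\mathbf 1_{[0,1)}}(s)}=\Abs{\sin\pi s}/(\pi\Abs{s})\ge 2/\pi$ for $\Abs{s}\le\tfrac12$, we get $\Abs{\phi(\theta)}\ge(2/\pi)^{2}$ on $[-\tfrac12,\tfrac12]^{2}$.

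The key analytic input, which I would establish separately, is the uniform estimate
\begin{equation}
J_{0}(r)^{2}+4\,J_{0}(2r)^{2}\ \ge\ \frac{c_{1}}{1+r},\qquad r\ge 0 .
\tag{$\dagger$}
\end{equation}
I would prove $(\dagger)$ from $J_{0}(r)=(2/\pi r)^{1/2}\cos(r-\pi/4)+O(r^{-3/2})$, which makes the left-hand side equal to $(\pi r)^{-1}\bigl(3+\sin 2r+2\sin 4r\bigr)+O(r^{-2})$ with the trigonometric polynomial $3+\sin 2r+2\sin 4r$ bounded below by a positive absolute constant, settling large $r$; for bounded $r$ one invokes continuity together with the fact that no positive zero of $J_{0}$ is twice another zero, so $J_{0}(r)^{2}+4J_{0}(2r)^{2}$ never vanishes. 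Granting $(\dagger)$ and putting $r=2\pi t\Abs{\xi}$, one gets $\Abs{\widehat{\sigma_{t}}(\xi)}^{2}+\Abs{\widehat{\sigma_{2t}}(\xi)}^{2}=4\pi^{2}t^{2}\bigl(J_{0}(r)^{2}+4J_{0}(2r)^{2}\bigr)\ge c_{1}'t^{2}/(1+t\Abs{\xi})$. Dropping from the Plancherel sum all frequencies except $\xi=\theta$ (i.e.\ $m=0$), where $\Abs{\theta}\le 1/\sqrt 2<1$ and hence $1+t\Abs{\theta}\le 2t$, and using the identities above,
\begin{equation}
N^{-2}\int_{\mathbb{T}_{N}}\bigl(\Abs{D_{t}}^{2}+\Abs{D_{2t}}^{2}\bigr)\ \ge\ \sum_{\theta}\Abs{c_{\theta}}^{2}\Abs{\phi(\theta)}^{2}\cdot\frac{c_{1}'t^{2}}{2t}\ \ge\ \frac{c_{1}'}{2}\Bigl(\frac{2}{\pi}\Bigr)^{4}t\sum_{\theta}\Abs{c_{\theta}}^{2}\ =\ c_{0}\,t ,
\end{equation}
which is $(\star)$. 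I expect $(\dagger)$ — in particular controlling the transition range $r\asymp 1$ and ruling out resonances between the zeros of $J_{0}(r)$ and $J_{0}(2r)$ — to be the only genuinely delicate point; the remaining ingredients are standard Plancherel identities together with a routine boundary estimate.
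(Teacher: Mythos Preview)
Your proposal is correct and follows essentially the same approach as the paper: establish the Bessel-function lower bound (your $(\dagger)$ is the paper's Lemma~\ref{l.double}), apply Plancherel together with the structure of $\widehat{f_N}$ on the fundamental cube $[-\tfrac12,\tfrac12]^2$ to obtain the $L^2$ discrepancy bound $\gtrsim t$ (the paper's Corollary~\ref{c.tor2t}), and then restrict to centres in an inner square with $N\gtrsim t^2$ so that the boundary contribution is negligible and both circles lie entirely inside. The only cosmetic difference is that you work with a periodic extension on $\mathbb{T}_N$ while the paper extends $f_N$ by zero on $\mathbb{R}^2$.
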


We also show that if we just care about finding \emph{arcs} with large discrepancy, then we can do so for any fixed radius $t\geq 1$.

\begin{theorem}\label{t.singleradius}
	Let $f$ be a checkerboard coloring of the plane as before and let $t\geq 1$.There exists a circular arc $K$ of radius $t$ such that
$$ \ABs{\int_K f} \geq c t^\frac{1}{2},$$
for some numerical constant $c>0$.
\end{theorem}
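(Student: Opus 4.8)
The plan is a second-moment argument over the centres of the arcs, carried out on the Fourier side, together with a stationary-phase analysis of arc-length measures. (Theorem~\ref{t.fullcircle} does not obviously suffice: its first alternative already produces a radius-$t$ arc, but its second produces only a full circle of radius $2t$, from which it is unclear how to extract a radius-$t$ arc.) Assume towards a contradiction that, for a small absolute constant $c>0$ to be fixed at the end, \emph{every} circular arc of radius $t$ has discrepancy less than $c\sqrt t$. Fix an aperture $w\in(0,2\pi)$ and finitely many equally spaced directions $\psi_1,\dots,\psi_m$ whose $w/2$-neighbourhoods cover the circle with room to spare; writing $e(\theta)=(\cos\theta,\sin\theta)$, let $\lambda_i$ be the arc-length measure of the arc $\{\,te(\psi_i+\theta):\abs\theta\le w/2\,\}$. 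Writing $f=\mathbf{1}_{Q_0}*\nu$ with $\nu=\sum_{p\in\mathbb Z^2}\epsilon_p\delta_p$ and $\epsilon_p=f|_{Q_p}\in\{-1,+1\}$, the signed discrepancy of the arc $x+\{\,te(\psi_i+\theta):\abs\theta\le w/2\,\}$ is
\[
A_i(x):=\int f(x+y)\,d\lambda_i(y)=(f*\check\lambda_i)(x)=\sum_{p\in\mathbb Z^2}\epsilon_p\,g_i(x-p),
\]
where $g_i:=\mathbf{1}_{Q_0}*\check\lambda_i$ is bounded, supported in a ball of radius $t+2$, and $\widehat{g_i}=\widehat{\mathbf{1}_{Q_0}}\,\overline{\widehat{\lambda_i}}$; by hypothesis $\abs{A_i(x)}<c\sqrt t$ for all $i$ and all $x$.

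I would then average $\abs{A_i}^2$ over a large integer square $[0,N]^2$. Replacing $f$ by $f\mathbf{1}_{[0,N]^2}$ up to a boundary error (legitimate since $g_i$ is supported in a ball of radius $O(t)$ and $\abs{A_i}\le tw$ everywhere), Plancherel's theorem and the $1$-periodicity of $E_N(\xi):=\sum_{p\in[0,N)^2\cap\mathbb Z^2}\epsilon_p e^{-2\pi i p\cdot\xi}$ give
\[
\frac1{N^2}\int_{[0,N]^2}\abs{A_i}^2=\frac1{N^2}\int_{[0,1)^2}\abs{E_N(\xi)}^2\,W_i(\xi)\,d\xi+O_t(1/N),\qquad W_i(\xi):=\sum_{k\in\mathbb Z^2}\abs{\widehat{g_i}(\xi+k)}^2,
\]
where $\int_{[0,1)^2}\abs{E_N}^2=N^2$ and $W_i$, the periodisation of $\abs{\widehat{g_i}}^2=\abs{\widehat{\mathbf{1}_{Q_0}}}^2\,\abs{\widehat{\lambda_i}}^2$, is produced by Poisson summation.

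The heart of the matter is the pointwise estimate $\max_{1\le i\le m}W_i(\xi)\ge c_0\,t$ for every $\xi\in[0,1)^2$, with $c_0>0$ absolute. Let $\eta_0\in[-\tfrac12,\tfrac12)^2$ be the representative of $\xi$ modulo $\mathbb Z^2$ and retain only the term $\xi+k=\eta_0$ in $W_i$. Since $\abs{\widehat{\mathbf{1}_{Q_0}}(\eta)}^2=\operatorname{sinc}^2(\eta_1)\operatorname{sinc}^2(\eta_2)$ and $\eta_0\in[-\tfrac12,\tfrac12)^2$, one has $\abs{\widehat{\mathbf{1}_{Q_0}}(\eta_0)}^2\ge(2/\pi)^4$; the case $\eta_0=0$ is immediate since $W_i(0)\ge\abs{\widehat{\lambda_i}(0)}^2=(tw)^2$. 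For $\eta_0\neq0$, choose $\psi_i$ so that $\arg\eta_0$ lies well inside $(\psi_i-\tfrac w2,\psi_i+\tfrac w2)$; then in
\[
\widehat{\lambda_i}(\eta_0)=t\int_{-w/2}^{w/2}e^{-2\pi i t\abs{\eta_0}\cos(\theta-\alpha)}\,d\theta,\qquad\alpha:=\arg\eta_0-\psi_i,
\]
the phase has a single stationary point $\theta=\alpha$, lying strictly inside the interval with both endpoints non-stationary, and a stationary-phase estimate — valid uniformly in $\eta_0$ because $\abs{\eta_0}\le1/\sqrt2$ — gives $\abs{\widehat{\lambda_i}(\eta_0)}\gtrsim\min\bigl(t,\sqrt{t/\abs{\eta_0}}\bigr)\ge\sqrt t$ for $t\ge1$. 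Hence $W_i(\xi)\ge\abs{\widehat{\mathbf{1}_{Q_0}}(\eta_0)}^2\,\abs{\widehat{\lambda_i}(\eta_0)}^2\gtrsim t$ for this $i$.

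Combining, using $\max_iW_i\le\sum_iW_i$ and the identity of the second step,
\[
c_0\,t\ \le\ \frac1{N^2}\int_{[0,1)^2}\abs{E_N}^2\,\max_{1\le i\le m}W_i\ \le\ \sum_{i=1}^m\frac1{N^2}\int_{[0,N]^2}\abs{A_i}^2+O_t(1/N)\ \le\ m\,c^2\,t+O_t(1/N),
\]
the last inequality by the standing hypothesis; letting $N\to\infty$ forces $c_0\le mc^2$, which is false as soon as $c$ is chosen with $mc^2<c_0$. The resulting contradiction proves the theorem, the arc of discrepancy at least $c\sqrt t$ being one of the radius-$t$ arcs $x+\{\,te(\psi_i+\theta):\abs\theta\le w/2\,\}$. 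The main obstacle is the stationary-phase step: one must rule out Bessel-type cancellation, i.e. show that $\abs{\widehat{\lambda_i}(\eta)}$ stays comparable to its natural size $\sqrt t$ for every $\eta\in[-\tfrac12,\tfrac12)^2\setminus\{0\}$ lying well inside the directional cone of $\lambda_i$. This requires bridging the oscillatory regime $t\abs\eta\gg1$ (ordinary stationary phase) and the bounded regime $t\abs\eta=O(1)$ (where $\widehat{\lambda_i}(\eta)$ is $t$ times a continuous, nowhere-vanishing function of $t\abs\eta$ and $\alpha$), and checking that the non-stationary endpoint contributions, of size $O(\abs\eta^{-1})$, never dominate — precisely the reason for keeping $\arg\eta$ well inside the arc, hence for using several directions $\psi_i$. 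Everything else, including the reduction to the Poisson-summation identity, is routine.
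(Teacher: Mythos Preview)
Your approach is genuinely different from the paper's. The paper deduces Theorem~\ref{t.singleradius} from its finite counterpart (Theorem~\ref{t.singleradiusN}): restrict to a box $Q_N$ with $N\simeq t$, study the $L^2$ discrepancy of \emph{full} circles of radius $t$, and handle the zeros of $\hat\sigma_1$ not by avoiding them but by excising thin annuli $A_w(\beta_k)$ around them and proving, via a Poincar\'e-type inequality (Proposition~\ref{p.poincare}), that $\widehat{f_N}$ carries negligible $L^2$ mass on $\cup_k A_w(\beta_k)$ when $N\simeq t$. The resulting large-discrepancy circle need not lie in $Q_N$, so only an arc is obtained. Your idea --- replace the single full circle by finitely many overlapping arcs so that at every frequency $\eta_0\in[-\tfrac12,\tfrac12)^2$ at least one $\widehat{\lambda_i}(\eta_0)$ is large --- is in spirit closer to the paper's $t$-or-$2t$ argument (Lemma~\ref{l.double}), substituting a family indexed by direction for a family indexed by dilation. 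If it works, it is arguably more elementary: no Poincar\'e inequality, and arcs are produced directly rather than as circle--box intersections.

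The gap is exactly where you flag it, and it is not minor. You assert that in the bounded regime $t\abs{\eta}=O(1)$ the function
\[
F(s,\alpha)=\int_{-w/2}^{w/2}e^{-is\cos(\theta-\alpha)}\,d\theta,\qquad s=2\pi t\abs{\eta_0},
\]
is \emph{nowhere vanishing}, but you do not prove it, and it is not obvious. For $s$ small one has $\operatorname{Re}\bigl(e^{is}F(s,\alpha)\bigr)>0$, and for $s$ large stationary phase gives $\abs{F}\gtrsim s^{-1/2}$; but the threshold for the latter depends on $w$ through the endpoint terms $O\bigl((s\sin(w/2))^{-1}\bigr)$, and a back-of-the-envelope computation shows the two regimes need not overlap for any fixed $w$. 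In the intermediate range $F(\cdot,\alpha)$ is merely a complex-valued oscillatory integral with no evident reason not to vanish --- recall that for the full circle the analogous function $J_0$ \emph{does} vanish. Your pointwise bound $\max_i W_i(\xi)\ge c_0 t$ hangs entirely on this claim: for each $\xi$ you commit to the single lattice representative $\eta_0$ and to the few $\psi_i$ with $\arg\eta_0$ well inside their cones, so if $F(s,\alpha)=0$ for those discretely many $\alpha$'s at some $s$, the argument collapses at that $\xi$. To close the gap you would need either a direct proof that $F$ has no zeros on the relevant compact $(s,\alpha)$-set for some specific $w$ (possibly including a numerical check on a bounded interval), or an enlargement of the family --- say arcs of two apertures $w_1,w_2$, or many more directions --- together with an argument that the corresponding $F$'s have no common zero. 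Either route is plausible but neither is supplied; by contrast, the paper's Poincar\'e-inequality argument sidesteps the issue entirely by tolerating the zeros rather than excluding them.
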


The results in \cite{K}, \cite{IK} as well as Theorem \ref{t.fullcircle} and Theorem \ref{t.singleradius}, are direct consequences of their finite counterparts. To make this precise, let $N$ be a positive integer and write $Q_N$ for the square $Q_N\eqdef[0,N)^2$. We now consider $Q_N$ as a union of congruent unit cells in the form 
\begin{align}
Q(p)\eqdef p+[0,1)^2, \quad p=(p_1,p_2)\in G,
\end{align}
where $G$ is the part of the lattice $\mathbb Z^2$ that lies in $Q_N$, that is $G\eqdef\{(p_1,p_2):0\leq p_1,p_2\leq N-1\}$. A \emph{coloring} of $Q_N$ will be a function of the form 
\begin{align}
	f_N:Q_N\to \{-1,+1\},\quad f_N\mbox{ constant in each cell }Q(p).
\end{align}
We extend $f_N$ to the whole plane $\mathbb R^2$ by setting $f_N\equiv 0$ outside $Q_N$. The discrepancy of a circle $C(x,t)$ with center $x\in\mathbb R^2$ and radius $t>0$ is defined as
$$ D_t(f_N,x)\eqdef \int_{C(x,t)} f_N=(f_N*\sigma_t)(x),$$
where $\sigma_t$ is the arc-length measure on a circle of center $0$ and radius $t$. A problem that arises is that discrepancy of circles with respect to a finite coloring in general only corresponds to discrepancy of arcs with respect to a coloring of the whole plane. The reason of course is that a circle $C(x,t)$ might intersect $Q_N$, and even have large discrepancy with respect to the finite coloring of $Q_N$, without necessarily lying entirely inside $Q_N$.

For example, Iosevich and the first author prove in \cite[ Theorem 1]{IK} that for any coloring $f_N$ of $Q_N$, there exists a circular arc $K$ of radius $R$, $N/5<R<N/4$, with
	$$\ABs{\int_{K} f_N}\geq c N^\frac{1}{2},$$
for some numerical constant $c>0$. The authors are not able to conclude that there is a full circle with large discrepancy since their main tool is to show that the $L^2$-type discrepancy
$$\frac{1}{N^3}\int_{N/5} ^{N/4} \int_{\mathbb R^2}|D_t(f_N,x)|^2 dx\, dt,$$
is large. However, the previous $L^2$ integral takes into account arcs as well as full circles. Furthermore, the averaging in the radial variable results to circles or circular arcs of radius \emph{comparable} to $N$ instead of radius exactly $N$. 

In this note we partially fix the previous two problems by avoiding the radial averaging. We also show that circles that do not lie entirely inside $Q_N$ do not significantly contribute to the $L^2$ norm $\norm D_t(f,\cdot).L^2. ^2+\norm D_{2t}(f,\cdot).L^2. ^2$ when $N\gtrsim t^2$. This results to a full circle of radius either $t$ or $2t$ with large discrepancy if $t$ is small comparable to $N$.

Theorem \ref{t.fullcircle} is an immediate consequence of the following theorem:

\begin{theorem}\label{t.fullcircleN} Let $t\geq 1$ and for a positive integer $N\geq 100 t^2$ consider any finite coloring $f_N : Q_N\to\{−1, +1\}$ of $Q_N$. There exists $x\in\mathbb R$ such that the circle $C(x,2t)\subset Q_N$ and
$$\mbox{either}\quad\ABs { \int_{C(x,t)} f_N } \geq c t^\frac{1}{2} \quad\mbox{or}\quad \ABs { \int_{C(x,2t)} f_N } \geq c  (2t)^\frac{1}{2} ,$$
where $c>0$ is some numerical constant.	
\end{theorem}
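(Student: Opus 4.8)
The plan is to reduce the theorem to an $L^2$ estimate for the discrepancy functions $x\mapsto D_t(f_N,x)$ and $x\mapsto D_{2t}(f_N,x)$, and then to show that circles not contained in $Q_N$ carry only a negligible fraction of the $L^2$ mass once $N\geq 100 t^2$. Since $D_s(f_N,\cdot)=f_N*\sigma_s$, Plancherel's theorem gives
\[
\bigl\lVert D_t(f_N,\cdot)\bigr\rVert_{L^2(\mathbb R^2)}^2+\bigl\lVert D_{2t}(f_N,\cdot)\bigr\rVert_{L^2(\mathbb R^2)}^2=\int_{\mathbb R^2}\abs{\widehat{f_N}(\xi)}^2\Bigl(\abs{\widehat{\sigma_t}(\xi)}^2+\abs{\widehat{\sigma_{2t}}(\xi)}^2\Bigr)\,d\xi .
\]
Using $\widehat{\sigma_s}(\xi)=2\pi s\,J_0(2\pi s\abs\xi)$, the weight equals $4\pi^2 t^2\,h(2\pi t\abs\xi)$ where $h(r):=J_0(r)^2+4J_0(2r)^2$, and the whole argument rests on the uniform lower bound
\[
h(r)\geq \frac{c_0}{1+r}\qquad(r\geq 0)
\]
for a numerical constant $c_0>0$. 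This --- rather than any of the soft steps below --- is where I expect the real work to be.

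To prove it, note first that $h$ has no zero: $h(r_0)=0$ would force $J_0(r_0)=J_0(2r_0)=0$, i.e. a positive zero $j_{0,k}$ of $J_0$ with $2j_{0,k}$ again a zero of $J_0$; since the positive zeros satisfy $j_{0,k}\sim(k-\tfrac14)\pi$ they are not closed under doubling, which is confirmed by a finite numerical check on the zeros below any fixed absolute threshold $R_0$. Hence $h$ is bounded below by a positive constant on $[0,R_0]$. For $r\geq R_0$ I would insert $J_0(r)=\sqrt{2/(\pi r)}\,\cos(r-\tfrac\pi4)+O(r^{-3/2})$ to get $h(r)=\tfrac{2}{\pi r}\bigl(\cos^2\theta+2\cos^2(2\theta+\tfrac\pi4)\bigr)+O(r^{-2})$ with $\theta=r-\tfrac\pi4$; the elementary identity $\cos^2\theta+2\cos^2(2\theta+\tfrac\pi4)=\tfrac32+\tfrac12\cos 2\theta-\sin 4\theta$ has a strictly positive minimum over $\theta$, so $h(r)\geq c/r$ for $r\geq R_0$ once $R_0$ is chosen large enough to absorb the error term. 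Combining the two ranges gives the displayed bound.

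Next I would localise the frequency mass of $f_N$. Since $f_N$ is constant on the unit cells, $\widehat{f_N}(\xi)=\widehat{\mathbf 1_{[0,1)^2}}(\xi)\,P(\xi)$ with $P(\xi):=\sum_{p\in G}f_N(p)e^{-2\pi i p\cdot\xi}$. The function $P$ is $\mathbb Z^2$-periodic, so Parseval on the torus gives $\int_{Q}\abs{P(\xi)}^2\,d\xi=\sum_{p\in G}\abs{f_N(p)}^2=N^2$, where $Q:=[-\tfrac12,\tfrac12]^2$. On $Q$ we have $\abs{\widehat{\mathbf 1_{[0,1)^2}}(\xi)}^2=\prod_{j}\frac{\sin^2(\pi\xi_j)}{\pi^2\xi_j^2}\geq (2/\pi)^4$ and $2\pi t\abs\xi\leq \pi\sqrt2\,t$, hence $h(2\pi t\abs\xi)\geq c_0/(1+\pi\sqrt2\,t)\gtrsim t^{-1}$ for $t\geq1$. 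Discarding the nonnegative contribution of $\xi\notin Q$,
\[
\int_{\mathbb R^2}\abs{\widehat{f_N}(\xi)}^2\,h(2\pi t\abs\xi)\,d\xi\;\gtrsim\;\frac1t\int_{Q}\abs{P(\xi)}^2\,d\xi\;=\;\frac{N^2}{t},
\]
and multiplying by $4\pi^2 t^2$ yields $\bigl\lVert D_t(f_N,\cdot)\bigr\rVert_{L^2}^2+\bigl\lVert D_{2t}(f_N,\cdot)\bigr\rVert_{L^2}^2\gtrsim tN^2$.

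Finally I would discard the boundary circles and conclude. If $D_t(f_N,x)\neq 0$ then $C(x,t)$ meets $Q_N$, so $\dist(x,Q_N)<t$; and, $Q_N$ being convex, $C(x,2t)\subseteq Q_N$ exactly when $x\in Q_N$ with $\dist(x,\partial Q_N)\geq 2t$. Thus the set $E$ of $x$ at which $C(x,2t)\not\subseteq Q_N$ yet $\abs{D_t(f_N,x)}+\abs{D_{2t}(f_N,x)}\neq 0$ lies in a strip around $\partial Q_N$ of area $\lesssim Nt$, and the trivial bounds $\abs{D_t(f_N,x)}\leq 2\pi t$, $\abs{D_{2t}(f_N,x)}\leq 4\pi t$ give
\[
\int_{E}\Bigl(\abs{D_t(f_N,x)}^2+\abs{D_{2t}(f_N,x)}^2\Bigr)\,dx\;\lesssim\;Nt^3\;\leq\;\frac{N^2 t}{100},
\]
using $N\geq 100 t^2$. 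For an appropriate choice of the numerical constants this is at most half of the lower bound of the previous step, so the same integral over $\{x:C(x,2t)\subseteq Q_N\}$ is still $\gtrsim tN^2$; since that set has measure $\leq N^2$, there is a set of $x$ of positive measure with $C(x,2t)\subseteq Q_N$ (hence also $C(x,t)\subseteq Q_N$) and $\abs{D_t(f_N,x)}^2+\abs{D_{2t}(f_N,x)}^2\gtrsim t$, so for any such $x$ either $\abs{D_t(f_N,x)}\geq c\,t^{1/2}$ or $\abs{D_{2t}(f_N,x)}\geq c\,t^{1/2}\geq (c/\sqrt2)\,(2t)^{1/2}$. It is worth stressing why two radii are needed: $\widehat{\sigma_t}$ vanishes identically on the circles $\abs\xi=j_{0,k}/(2\pi t)$ and nothing prevents $\widehat{f_N}$ from concentrating there, so a single radius cannot furnish a lower bound of the right order; two dilated circles work precisely because their zero sets are asymptotically disjoint, which is the content of the bound on $h$.
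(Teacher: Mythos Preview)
Your argument is correct and follows essentially the same route as the paper: the key step is the same Bessel-function lower bound $|\hat\sigma_1(\xi)|^2+|\hat\sigma_1(2\xi)|^2\gtrsim 1/|\xi|$ (your $h(r)\gtrsim 1/(1+r)$), proved via the asymptotic expansion for large $r$ and a finite check for small $r$, followed by Plancherel, restriction to the unit frequency cube, and a trivial sup-norm estimate for the boundary strip. Your version is slightly tidier in that you keep both radii together through the boundary step and make the trigonometric minimum $\tfrac32+\tfrac12\cos 2\theta-\sin 4\theta>0$ explicit, but conceptually there is no difference.
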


Similarly, Theorem \ref{t.singleradius} is a consequence of:
\begin{theorem}\label{t.singleradiusN} Let $f_N : Q_N\to\{−1, +1\}$ be a finite coloring of $Q_N$ and $N\simeq t$. There exists a circle $C$ of radius $t$ such that
	$$\ABs{\int_C f_N }\geq c\sqrt{t},$$
	where $c>0$ is some numerical constant.
\end{theorem}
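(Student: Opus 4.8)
The plan is to prove the quantitative $L^{2}$ statement
$$\int_{\mathbb{R}^{2}}|D_t(f_N,x)|^{2}\,dx\ \gtrsim\ t\,N^{2}\ \simeq\ t^{3},$$
and then deduce the theorem by a pigeonhole comparison. Since $f_N$ is supported in $Q_N$ and $\sigma_t$ on the circle $|y|=t$, the function $D_t(f_N,\cdot)=f_N*\sigma_t$ is supported in the set of points at distance $\le t$ from $Q_N$, which has area $\lesssim (N+2t)^{2}\lesssim t^{2}$ (using $N\simeq t$). Hence $\sup_x|D_t(f_N,x)|^{2}\gtrsim t^{-2}\int_{\mathbb{R}^{2}}|D_t(f_N,x)|^{2}\,dx\gtrsim t$, and for an $x$ realizing the supremum the circle $C(x,t)$ satisfies $\bigl|\int_{C(x,t)}f_N\bigr|=|D_t(f_N,x)|\gtrsim t^{1/2}$, which is the assertion. (The same reasoning, applied to the restriction of an arbitrary plane coloring to a box $Q_N$ with $N\simeq t$, gives Theorem~\ref{t.singleradius}, reading $\int_{C}f_N$ as the integral over the arc $C\cap Q_N$.)

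For the $L^{2}$ norm, write $f_N=\sum_{p\in G}\epsilon_p\mathbf{1}_{Q(p)}$ with $\epsilon_p=\pm1$, so that $\widehat{f_N}=\widehat{\mathbf{1}_{[0,1)^{2}}}\cdot P$ where $P(\xi)=\sum_{p\in G}\epsilon_p e^{-2\pi i p\cdot\xi}$ is $\mathbb{Z}^{2}$-periodic with $\int_{[0,1)^{2}}|P|^{2}=|G|=N^{2}$, while $|\widehat{\mathbf{1}_{[0,1)^{2}}}(\xi)|=\prod_j|\tfrac{\sin\pi\xi_j}{\pi\xi_j}|\ge(2/\pi)^{4}$ on the fundamental cell $[-\tfrac12,\tfrac12)^{2}$ and $\widehat{\sigma_t}(\xi)=2\pi t\,J_0(2\pi t|\xi|)$. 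Plancherel and periodisation give
$$\int_{\mathbb{R}^{2}}|D_t(f_N,x)|^{2}\,dx=\int_{[-\frac12,\frac12)^{2}}|P(\xi)|^{2}\,W(\xi)\,d\xi,\qquad W(\xi):=(2\pi t)^{2}\!\!\sum_{m\in\mathbb{Z}^{2}}\!\!|\widehat{\mathbf{1}}(\xi+m)|^{2}J_0(2\pi t|\xi+m|)^{2}.$$
Keeping only the term $m=0$ gives $W(\xi)\ge(2\pi t)^{2}(2/\pi)^{4}J_0(2\pi t|\xi|)^{2}$ on the cell; the crucial point is that the prefactor $(2\pi t)^{2}$ makes this useful even though $J_0(2\pi t|\xi|)^{2}$ is not bounded below by a constant — we only need it bounded below by $c_0/t$. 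Accordingly put $S:=\{\xi\in[-\tfrac12,\tfrac12)^{2}:J_0(2\pi t|\xi|)^{2}<c_0/t\}$ for a small absolute constant $c_0$; then $W\gtrsim t$ on $[-\tfrac12,\tfrac12)^{2}\setminus S$, and the problem reduces to showing that $P$ cannot concentrate on $S$, i.e. $\int_{[-\frac12,\frac12)^{2}\setminus S}|P|^{2}\ge\tfrac12 N^{2}$.

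The set $S$ is a union of $O(t)$ concentric annular shells, one around each circle $|\xi|=z_k/(2\pi t)$, where $z_k$ is the $k$-th zero of $J_0$ and $z_k\lesssim t$ on the cell. Because the zeros of $J_0$ are simple (so $J_0(u)^{2}$ vanishes quadratically there) and $z_k\lesssim t$, each shell has radial width $\le\delta_0/t$ with $\delta_0\to0$ as $c_0\to0$, while consecutive shells are separated by $\gtrsim1/t$. Since $N\simeq t$, these shells are simultaneously thin and well separated at the resolution scale $1/N$ natural to the trigonometric polynomial $|P|^{2}$ (which has degree $<N$ in each variable): writing $|P|^{2}=V_N*|P|^{2}$ with a de la Vallée Poussin kernel $V_N$, one gets $\int_S|P|^{2}\le\bigl(\sup_y\int_S|V_N(x-y)|\,dx\bigr)N^{2}$, and the supremum is $\le\tfrac12$ once $c_0$ (hence $\delta_0$) is small, by the thinness and separation of the shells (a Logvinenko–Sereda–type non-concentration estimate). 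Combining the displays yields $\int_{\mathbb{R}^{2}}|D_t(f_N,x)|^{2}\,dx\gtrsim tN^{2}\simeq t^{3}$, as needed.

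The main obstacle is exactly this non-concentration step, and it is where both halves of the hypothesis $N\simeq t$ are used: $N\gtrsim t$ is needed so that $tN^{2}$ reaches the target $t^{3}$, while $N\lesssim t$ is needed so that the shells of $S$ — whose geometry is fixed by the scale $1/t$ of the zeros of $J_0(2\pi t|\cdot|)$ — remain thin at scale $1/N$; if $N$ were much larger than $t$, a single shell would be "fat" at scale $1/N$ and $|P|^{2}$ could in principle live on it. The delicate point is to carry out the non-concentration estimate robustly, without losing $N$-dependent factors (a crude Cauchy–Schwarz through $\||P|^{2}\|_{2}$, for instance, fails precisely in the concentrated case one must rule out); everything else is the by-now standard Fourier bookkeeping of the circle-discrepancy method.
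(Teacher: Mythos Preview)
Your overall strategy coincides with the paper's: prove $\int_{\mathbb R^2}|D_t(f_N,x)|^2dx\gtrsim tN^2$ by bounding $|\widehat{\sigma_t}|^2$ from below off a bad set of thin annuli around the zeros of $J_0$, and then showing that $|\widehat{f_N}|^2$ cannot concentrate on that bad set. The genuine difference is in the non-concentration step. The paper proves an ad hoc Poincar\'e-type inequality (Proposition~\ref{p.poincare}): $\int_B|g|^2\lesssim\int_{B\setminus A}|g|^2+w^2\int_B|\nabla g|^2$ for $A$ a union of well-separated annuli of width $w$; applied with $g=\widehat{f_N}$ and $w\simeq 1/t$, the gradient term is controlled by $\int|x|^2|f_N(x)|^2dx\lesssim N^4$, which is acceptable precisely when $N\lesssim t$. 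You instead exploit that $|P|^2$ is a trigonometric polynomial of degree $<2N$ (so you should write $|P|^2=V_{2N}*|P|^2$, not $V_N$) and bound $\int_S|P|^2\le\bigl(\sup_\eta\int_S|V_{2N}(\cdot-\eta)|\bigr)N^2$; the supremum is indeed $O(\delta_0)$ since each $1/N$-square meets $S$ in $O(1)$ strips of area $\lesssim\delta_0/N^2$ and the product kernel satisfies $\sum_Q|Q|\sup_Q|V_{2N}|\lesssim 1$ --- a short computation you should write out rather than defer to the phrase ``Logvinenko--Sereda-type''. Both inputs (gradient bound via support size of $f_N$ versus band-limitation of $P$) encode the same fact that $\widehat{f_N}$ varies only at scale $\gtrsim 1/N$, so the two routes are morally equivalent; the paper's is fully self-contained, while yours stays on the torus and avoids introducing the Poincar\'e lemma. (Minor slip: the lower bound for $|\widehat{\mathbf 1_{[0,1)^2}}|$ on the fundamental cell is $(2/\pi)^2$, not $(2/\pi)^4$.)
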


\begin{remark}
	Note that in Theorem \ref{t.singleradiusN} we cannot guarantee that the circle $C$ is contained in $Q_N$. Thus, Theorem \ref{t.singleradiusN} only results to an \emph{arc} of radius $t$ in the infinite coloring of the plane with discrepancy $\sim \sqrt{t}$.
\end{remark}

We note that discrepancies with respect to non-atomic colorings have been considered by Rogers in \cite{RO1}, \cite{RO2} and \cite{RO3} where the author considers, among other things, the discrepancy of lines and half spaces with respect to finite colorings of the plane. Rogers proves lower bounds for the discrepancy of these families of sets with respect to generalized colorings. His results do not seem to be comparable to the results in this paper.

The rest of the paper is organized as follows. In Section \ref{s.fullcircles} we use the classical asymptotic estimates for the Fourier transform of the arc-length measure on the circle in order to prove Theorem \ref{t.fullcircleN}. In Section \ref{s.singleradius} we prove Theorem \ref{t.singleradiusN} by an appeal to the asymptotic estimates of the Fourier transform of the arc-length measure together with an appropriate Poincar\'e-type inequality. Finally in Section \ref{s.generalsets} we discuss the discrepancy of  more general families of sets with respect to a coloring of the plane. The corresponding lower bounds are contained in Theorem \ref{t.segment}. The main tool for these estimates are lower bounds for the averages of Fourier transforms of indicator functions. For the sake of completeness, we include these estimates and their proofs  in Section \ref{s.lowerfourier}.

\section{Notations} Throughout the paper $c$ denotes a numerical positive constant which might change even in the same line of text. We often suppress numerical constants by using the symbol $\lesssim$. Thus $A\lesssim B$ means that $A\leq c B$ for $c$ as described. Likewise the notation $A\simeq B$ means that $A\lesssim B$ and $A\gtrsim B$. We write $B(x,r)$ for the Euclidean disk of radius $r>0$ centered at $x\in\mathbb R^2$. We also write $C(x,r)=\partial B(x,r)$ for the circle of radius $r>0$, centered at $x\in\mathbb R^2$. For the unit circle of $\mathbb R^2$ we also use the symbol $S^{1}=C(0,1)$.

\section{Full circles of large discrepancy}\label{s.fullcircles} Recall that the discrepancy of a circle $C(x,t)$ with respect to the coloring $f_N$ of the square $Q_N=[0,N)^2$ is defined as 
$$D_t(f_N)(x)\eqdef \int_{C(x,t)}f= (f*\sigma_t)(x),$$
where $\sigma_t$ is the arc-length measure on the circle $C(0,t)$. Observe that the function $(f_N *\sigma_t)(x)$ has support in $Q_N+B(0,t)$ in general. However in Theorems \ref{t.fullcircleN} and \ref{t.singleradiusN} we only need to consider values $t\lesssim N$ so the measure of the support is comparable to $N^2$. We thus study the $L^2$ discrepancy 
$$D_t(f_N,2)\eqdef \bigg(\frac{1}{N^2}\int_{\mathbb R^2} |(f_N*\sigma_t)(x)|^2 dx \bigg)^\frac{1}{2},$$
since we obviously have the bound
$$\sup_{x\in\mathbb R^2}| D_t(f_N)(x) | \gtrsim D_t(f_N,2).$$
Furthermore, denoting by $\hat \sigma_1$ the Fourier transform of the measure $d\sigma_1$,
$$\hat \sigma_1 (\xi)=\int_{S^1}e^{-2\pi i x'\cdot\xi}d\sigma_1(x'),$$
we have that
$$D_t(f_N,2)^2=\frac{t^2}{N^2}\int_{\mathbb R^2} |\widehat{ f_N}(\xi)|^2 |\hat \sigma_1 (t \xi)|^2 d\xi.$$
The following Lemma is the most essential part of the proof of Theorem \ref{t.fullcircleN}.

\begin{lemma}\label{l.double} For all $|\xi|\geq \frac{1}{2\pi}$ we have that
	$$|\hat \sigma _1 (\xi)|^2+|\hat \sigma_1 (2\xi)|^2\gtrsim\frac{1}{|\xi|}.$$
\end{lemma}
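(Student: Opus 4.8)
The plan is to use the classical asymptotic expansion of $\hat\sigma_1$, namely that for $|\xi|\to\infty$ one has
$$\hat\sigma_1(\xi) = \frac{2}{\sqrt{|\xi|}}\cos\!\left(2\pi|\xi| - \frac{\pi}{4}\right) + O\!\left(|\xi|^{-3/2}\right),$$
which follows from stationary phase applied to the oscillatory integral $\int_{S^1} e^{-2\pi i x'\cdot\xi}\,d\sigma_1(x')$. The point of pairing $\xi$ with $2\xi$ is that the leading terms of $|\hat\sigma_1(\xi)|^2$ and $|\hat\sigma_1(2\xi)|^2$ behave like $\frac{4}{|\xi|}\cos^2(2\pi|\xi|-\frac\pi4)$ and $\frac{2}{|\xi|}\cos^2(4\pi|\xi|-\frac\pi4)$ respectively; since $\cos^2\alpha$ and $\cos^2(2\alpha - \text{something})$ cannot both be small (the two cosine-squared factors have no common zero, and more quantitatively $4\cos^2(\theta-\tfrac\pi4) + 2\cos^2(2\theta-\tfrac\pi4)$ is bounded below by an absolute positive constant), the sum of the two leading terms is $\gtrsim 1/|\xi|$. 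First I would record the stationary-phase asymptotic with an explicit error bound, then reduce the claim to the elementary trigonometric inequality $\inf_{\theta\in\mathbb R}\bigl(4\cos^2(\theta-\tfrac\pi4) + 2\cos^2(2\theta-\tfrac\pi4)\bigr) > 0$, which one checks by noting that if $\cos(\theta-\tfrac\pi4)$ is near $0$ then $\theta-\tfrac\pi4$ is near a multiple of $\pi$, whence $2\theta - \tfrac\pi4 = 2(\theta-\tfrac\pi4) + \tfrac\pi4$ is near a multiple of $2\pi$ plus $\tfrac\pi4$, so $\cos^2(2\theta-\tfrac\pi4)$ is near $\tfrac12$, bounded away from $0$.

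Concretely, I would proceed as follows. Write $r=|\xi|$ and set $g(r)\eqdef \sqrt r\,\hat\sigma_1(\xi)$, which depends only on $r$ by rotational invariance of $\sigma_1$. From the asymptotic expansion, $g(r) = 2\cos(2\pi r-\tfrac\pi4) + E(r)$ with $|E(r)|\le A/r$ for some absolute constant $A$ and all $r\ge 1$, say (for $\tfrac{1}{2\pi}\le r\le 1$ one handles a compact range separately — see below). Then
$$r\bigl(|\hat\sigma_1(\xi)|^2 + |\hat\sigma_1(2\xi)|^2\bigr) = g(r)^2 + \tfrac12\, g(2r)^2,$$
using $|\hat\sigma_1(2\xi)|^2 = g(2r)^2/(2r)$. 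Expanding and using $|E|\le A/r$, this is at least
$$4\cos^2\!\left(2\pi r - \tfrac\pi4\right) + \cos^2\!\left(4\pi r - \tfrac\pi4\right) - \frac{C}{r}$$
for an absolute $C$ (the cross terms $2\cdot 2\cos(\cdot)E(r)$ etc. are $O(1/r)$). By the trigonometric inequality above, with $\theta = 2\pi r$, the main part is $\ge 2\delta$ for an absolute $\delta>0$. Choosing $r_0$ large enough that $C/r \le \delta$ for $r\ge r_0$ gives the claim for $r\ge r_0$. For $\tfrac1{2\pi}\le r\le r_0$ one argues by compactness: $\hat\sigma_1(\xi)$ and $\hat\sigma_1(2\xi)$ are real-analytic (entire, being Fourier transforms of compactly supported measures) functions of $r$ that do not vanish simultaneously on this compact interval — indeed $\hat\sigma_1(\xi) = 2\pi J_0(2\pi r)$ and the zeros of $J_0$ at $r$ and $2r$ never coincide since consecutive zero-ratios of $J_0$ are never exactly $2$ — so $|\hat\sigma_1(\xi)|^2 + |\hat\sigma_1(2\xi)|^2$ attains a positive minimum there, while $1/|\xi|$ is bounded above, and the ratio is bounded below.

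The main obstacle is the elementary but slightly delicate trigonometric lemma that $4\cos^2(\theta-\tfrac\pi4)+2\cos^2(2\theta-\tfrac\pi4)$ is bounded below by a positive constant; everything else is bookkeeping with the stationary-phase error term and a compactness argument on the bounded range of $|\xi|$. One convenient way to verify the trigonometric bound without case analysis is to expand via double-angle formulas: $\cos^2(\theta-\tfrac\pi4)=\tfrac12(1+\sin 2\theta)$ and $\cos^2(2\theta-\tfrac\pi4)=\tfrac12(1+\sin 4\theta)=\tfrac12(1+2\sin 2\theta\cos 2\theta)$, so with $s=\sin 2\theta$, $u=\cos 2\theta$ (so $s^2+u^2=1$) the expression equals $2(1+s)+(1+2su)=3+2s+2su$, and minimizing $3+2s+2su$ over the circle $s^2+u^2=1$ gives a value $3 - 2\sqrt{1+\text{something}}>0$ after a short computation; in any case it is a continuous function on a compact set whose infimum one checks to be strictly positive (it equals $3-\sqrt5>0$). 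This confirms $\delta>0$ can be taken absolute, completing the plan.
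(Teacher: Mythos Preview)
Your proposal is correct and follows essentially the same route as the paper: write $\hat\sigma_1(r)=2\pi J_0(2\pi r)$, use the asymptotic $J_0(r)\sim r^{-1/2}\cos(r-\tfrac\pi4)$ with an explicit $O(1/r)$ error to reduce the large-$r$ case to the fact that $\cos(2\pi r-\tfrac\pi4)$ and $\cos(4\pi r-\tfrac\pi4)$ cannot both be small, and handle the bounded range $\tfrac{1}{2\pi}\le r\le r_0$ by checking that $J_0(2\pi r)$ and $J_0(4\pi r)$ share no zero there. The paper is terser (it says the asymptotic ``easily implies'' the bound for $r\ge 7/(2\pi)$ and points to Montgomery for the analogous argument), whereas you actually carry out the trigonometric minimization; one small slip to fix is that the infimum of $3+2s+2su$ on $s^2+u^2=1$ is $3-\tfrac{3\sqrt3}{2}$, not $3-\sqrt5$, though both are positive so nothing is affected.
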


\begin{proof} Setting $|\xi|=r$ we express the radial function $\hat \sigma_1$ by the well known formula
	 $$\hat \sigma_1 (r)=2\pi J_0(2\pi r) ,$$
where $J_0$ is the $0$-th order Bessel function. We use the asymptotic estimate
$$J_0(r)\simeq \frac{1}{\sqrt{ r}}\bigg(\cos(r-\frac{\pi}{4})+e(r) \bigg),$$
where the error term satisfies
$$|e(r)|\leq  \frac{1}{5r},$$
for $r\geq 1$. This is classical as $r\to+\infty$ but with a little more effort one can get the validity of the previous estimate for all $r\geq 1$. The previous asymptotic estimate easily implies that
\begin{align*}
	|\hat \sigma_1(r)|^2+|\hat \sigma_1(2r)|^2\gtrsim\frac{1}{r},
\end{align*}
for all $r\geq\frac{7}{2\pi}$.
For $\frac{1}{2\pi}\leq r \leq \frac{7}{2\pi}$ one can just directly check the zeros of $J_0$ to see that there is no $r$ so that $J_o(2\pi r)=J_0(4\pi r)=0$. We refer the interested reader to \cite[p. 113, \S 6.3]{M} where an identical argument is used for the derivation of a formula involving the $1$-st order Bessel function. 
\end{proof}

\begin{corollary}\label{c.tor2t} For any $t\geq 1$ and any positive integer $N$ we have that
	$$D_t(f_N,2)^2+D_{2t}(f_N,2)^2\gtrsim t.$$
\end{corollary}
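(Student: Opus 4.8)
The plan is to move everything to the Fourier side. Set
$$W(\xi)\eqdef t^2\abs{\hat\sigma_1(t\xi)}^2+(2t)^2\abs{\hat\sigma_1(2t\xi)}^2,$$
so that the Plancherel identity for $D_t(f_N,2)^2$ recorded just above gives
$$D_t(f_N,2)^2+D_{2t}(f_N,2)^2=\frac1{N^2}\int_{\mathbb R^2}\abs{\widehat{f_N}(\xi)}^2\,W(\xi)\,d\xi.$$
I would then prove two facts: a pointwise lower bound $W(\xi)\gtrsim t/(1+\abs\xi)$ valid for every $\xi$, and a concentration estimate showing that a fixed proportion of the $L^2$-mass of $\widehat{f_N}$ lies in a ball of absolute radius. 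Together these force the integral to be $\gtrsim tN^2$, which is the claim.

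For the lower bound on $W$: if $\abs\xi\geq\frac1{2\pi t}$, then Lemma~\ref{l.double} applied to $t\xi$ gives $\abs{\hat\sigma_1(t\xi)}^2+\abs{\hat\sigma_1(2t\xi)}^2\gtrsim\frac1{t\abs\xi}$, whence $W(\xi)\geq t^2\bigl(\abs{\hat\sigma_1(t\xi)}^2+\abs{\hat\sigma_1(2t\xi)}^2\bigr)\gtrsim t/\abs\xi\geq t/(1+\abs\xi)$. If instead $\abs\xi<\frac1{2\pi t}$, then $\hat\sigma_1(t\xi)=2\pi J_0(2\pi t\abs\xi)$ with $0\leq 2\pi t\abs\xi<1$, and since $J_0$ has no zero on $[0,1]$ we get $\abs{\hat\sigma_1(t\xi)}\geq 2\pi\min_{[0,1]}J_0>0$, so $W(\xi)\gtrsim t^2\geq t\geq t/(1+\abs\xi)$; here the hypothesis $t\geq1$ is used.

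The main point is the concentration estimate, and it is the only place where it matters that $f_N$ is constant on unit cells — for a general $L^2$ coloring the Fourier mass could escape to infinity and the statement would fail. Writing $f_N=\sum_{p\in G}\epsilon_p\mathbf 1_{Q(p)}$ with $\epsilon_p\in\{-1,+1\}$, we factor $\widehat{f_N}(\xi)=\widehat{\mathbf 1_{[0,1)^2}}(\xi)\,P(\xi)$, where $P(\xi)=\sum_{p\in G}\epsilon_p e^{-2\pi i p\cdot\xi}$ is $\mathbb Z^2$-periodic with $\int_{[0,1]^2}\abs{P}^2=\#G=N^2$, and $\abs{\widehat{\mathbf 1_{[0,1)^2}}(\xi)}^2=\operatorname{sinc}^2(\xi_1)\operatorname{sinc}^2(\xi_2)$ with $\operatorname{sinc}(u)=\tfrac{\sin\pi u}{\pi u}$. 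Tiling $\mathbb R^2$ by unit cells, using the periodicity of $P$ on each cell together with the elementary summability bound $\sum_{\abs k\geq M}\sup_{[k,k+1)}\operatorname{sinc}^2\lesssim 1/M$ in each coordinate, one obtains $\int_{\abs\xi>R}\abs{\widehat{f_N}(\xi)}^2\,d\xi\lesssim N^2/R$ for all $R\geq1$. Since $\int_{\mathbb R^2}\abs{\widehat{f_N}}^2=\int_{\mathbb R^2}\abs{f_N}^2=N^2$ by Plancherel (because $\abs{f_N}\equiv1$ on $Q_N$), choosing an absolute constant $R_0\geq1$ large enough gives $\int_{\abs\xi\leq R_0}\abs{\widehat{f_N}(\xi)}^2\,d\xi\geq\tfrac12 N^2$.

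It then remains to combine the pieces: restricting the frequency integral to $\abs\xi\leq R_0$ and using that $W(\xi)\gtrsim t/(1+\abs\xi)\geq t/(1+R_0)\gtrsim t$ there,
$$D_t(f_N,2)^2+D_{2t}(f_N,2)^2\geq\frac1{N^2}\int_{\abs\xi\leq R_0}\abs{\widehat{f_N}}^2\,W\gtrsim\frac t{N^2}\int_{\abs\xi\leq R_0}\abs{\widehat{f_N}}^2\gtrsim t.$$
Thus the only substantive step is the decay/concentration estimate of the third paragraph; Lemma~\ref{l.double} and the behaviour of $J_0$ near the origin are used as black boxes, and everything else is bookkeeping.
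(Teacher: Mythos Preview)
Your proof is correct and follows the same overall strategy as the paper: Plancherel, then Lemma~\ref{l.double} (together with the non-vanishing of $J_0$ near the origin) to bound the weight from below, then exploit the cell structure of $f_N$ to locate a fixed fraction of the mass of $\widehat{f_N}$ in a bounded frequency region.

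The only notable difference is in this last step. You prove a tail estimate $\int_{\abs\xi>R}\abs{\widehat{f_N}}^2\lesssim N^2/R$ via the decay of $\operatorname{sinc}^2$ and then subtract. The paper instead works directly on the cube $[-\tfrac12,\tfrac12]^2$: since $\abs{\sin(\pi x)}\geq 2\abs x$ for $\abs x\leq\tfrac12$, one has $\abs{\widehat{f_N}(\xi)}^2\gtrsim\abs{P(\xi)}^2$ pointwise on $[-\tfrac12,\tfrac12]^2$, and integrating $\abs P^2$ over one period gives exactly $N^2$. This avoids any summation over cells and makes the concentration step a single line. Your tail bound, on the other hand, is a slightly more robust statement that would transfer to other profiles than the indicator of a unit square. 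Either route closes the argument.
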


\begin{proof} We use Plancherel's theorem to write
	\begin{align*}
		D_t(f_N,2)^2+D_{2t}(f_N,2)^2 & =\frac{1}{N^2}\int_{\mathbb R^2} |\widehat{ f_N}(\xi)|^2 \big(|\hat \sigma_t(\xi)|^2+|\hat \sigma_{2t}(\xi)|^2 \big)\ d\xi \\ 
& \gtrsim \frac{1}{N^2}\int_{|\xi|\leq \frac{1}{2\pi}} |\widehat {f_N} (\xi/t)|^2 \big(|\hat \sigma_1 ( \xi)|^2+|\hat \sigma_ 1 (2 \xi)|^2 \big)\ d\xi \\ & \quad +\frac{1}{N^2}\int_{|\xi|> \frac{1}{2\pi}} |\widehat {f_N} (\xi/t)|^2 \big(|\hat \sigma_1 ( \xi)|^2+|\hat \sigma_ 1 (2 \xi)|^2 \big)\ d\xi\eqdef I+II.
	\end{align*}
For $I$ observe that $J_0 (2\pi \cdot)$  has no root in the range $|\xi|\leq \frac{1}{2\pi}$. We immediately get
$$ |I|\gtrsim \frac{1}{N^2}\int_{|\xi|\leq \frac{1}{2\pi}}|\widehat {f_N}(\xi/t)|^2 d\xi \geq \frac{t^2}{N^2}\int_{|\xi|\leq \frac{1}{2\pi t}}|\widehat{ f_N}(\xi)|^2 d\xi .$$	
For $II$ we use Lemma \ref{l.double} to write 
$$|II| \gtrsim \frac{1}{N^2}\int_{|\xi|>\frac{1}{2\pi}} |\widehat {f_N}(\xi/t)|^2 \frac{1}{|\xi|}d\xi \gtrsim\frac{t}{N^2}\int_{ [-\frac{1}{2},\frac{1}{2}]^2 \setminus \{\ |\xi|>\frac{1}{2\pi t } \ \}} |\widehat {f_N}(\xi)|^2 d\xi.$$
Combining the estimates and remembering that $t\geq 1$ we get 
$$ 	D_t(f_N,2)^2+D_{2t}(f_N,2)^2 \gtrsim \frac{t}{N^2}\int_{[-\frac{1}{2},\frac{1}{2}]^2} |\widehat {f_N}(\xi)|^2d \xi \gtrsim t $$
where we have used that
\begin{align}\label{e.1period}
	|\widehat {f_N}(\xi)|^2=\bigg|\frac{\sin(\pi\xi_1)}{\pi\xi_1}\frac{\sin(\pi\xi_2)}{\pi\xi_2}\sum_{j,k=0} ^{N-1}z_{jk} e^{2\pi i (j\xi_1+k\xi_2)}\bigg|^2\gtrsim \bigg|\sum_{j,k=0} ^{N-1}z_{jk} e^{2\pi i (j\xi_1+k\xi_2)}\bigg|^2
	\end{align}
for $\xi\in[-\frac{1}{2},\frac{1}{2}]^2$. This in turn is a consequence of the elementary estimate $|\sin(\pi x)|\geq 2|x|$ for $|x|\leq \frac{1}{2}$.
\end{proof}

\begin{proof}[Proof of Theorem \ref{t.singleradiusN}] Given $t\geq 1$ let $N\geq B t^2$ be a positive integer for some numerical constant $B>0$ to be determined later. By corollary \ref{c.tor2t} we have that
	$$D_s(f_N,2)\gtrsim \sqrt{s},$$
where $s$ is equal to either $t$ or $2t$. Consider the cube $Q_1\eqdef[s,N-s]^2$. We have
\begin{align*}
	\int_{[s,N-s]^2}|D_s(f_N)(x)|^2dx &=D_s(f_N,2)^2-\frac{1}{N^2}\int_{[-s,N+s]^2\setminus [s,N-s]^2 } |f_N*d\sigma_s(x)|^2dx \\ 
    &\gtrsim s(1-24 s^2/N )\gtrsim s,
\end{align*}
if $B$ is large enough, say $B\geq 100$. Since all the circles with centers in $[s,N-s]^2$ and radius $s$ are contained in $Q_N$ this proves Theorem \ref{t.fullcircleN}.
\end{proof}

\section{Single radius discrepancy for arcs} \label{s.singleradius}
Theorem \ref{t.fullcircleN} solves the problem of finding a full circle with large discrepancy. There is one element however that is not very satisfactory, namely the fact that we cannot guarantee that for \emph{every} radius $t\geq 1$ there corresponds a circle of radius \emph{exactly} $t$ with large discrepancy. The problem is caused by the roots of $\hat\sigma_1(\xi )$ which allow the expression
$$\int_{\mathbb R^2} |\hat f(\xi)|^2 |\hat \sigma_t (\xi)|^2d\xi$$
to become small. When $N\simeq t$ we can deal with this problem by essentially throwing away small neighborhoods of the roots of $\hat \sigma_1$ and showing that we don't loose much of the $L^2$ mass of the function $\hat f$.

We begin by analyzing the behavior of $\hat \sigma_1(|\xi|)$. By standard estimates we have the asymptotic expansion
\begin{equation}\label{e.asymptotic}
	\hat\sigma_1(\xi)=\hat\sigma_1(|\xi|)=2|\xi|^{-\frac{1}{2}}\cos\big(2\pi|\xi|-\frac{\pi}{4}\big)+O(|\xi|^{-\frac{3}{2}}),\quad |\xi|\to+\infty;	
\end{equation}
see for example \cite{W}. Observe that the cosine term in the asymptotic formula above vanishes exactly when
$$|\xi|	=\beta_k\eqdef \big(\frac{k}{2}+\frac{3}{8}\big), \quad k=0,1,2,\ldots.$$
For a small parameter $0<w<\frac{1}{8}$ we define the neighborhoods 
$$A_w(\beta_k)\eqdef \{\xi\in\mathbb R^2: | |\xi|-\beta_k| < w\}.$$
Observe that our choice of $w$ implies that the $A_w$'s do not overlap. The following lemma analyzes the behavior of $\hat \sigma_1$ away from the annuli $A_w$

\begin{lemma}\label{l.lowerestimate} For every sufficiently small $w>0$ there exists a constant $c(w)$ such that
	$$|\hat \sigma_1(|\xi|)|^2 \gtrsim_w \begin{cases} \frac{1}{|\xi|}, &\quad |\xi|>c(w), \quad \xi\notin \cup_k A_w(\beta_k) \\
		 1 ,&\quad |\xi|\leq c(w),\quad \xi\notin \cup_k A_w(\gamma_k),
	\end{cases}$$
where $\gamma_1<\gamma_2<\ldots<\gamma_M$ are the roots of $\hat \sigma_1$ in $\{|\xi|<c(w)\}$.
\end{lemma}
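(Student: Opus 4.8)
The plan is to prove Lemma \ref{l.lowerestimate} by splitting the analysis into the large-argument regime, where the asymptotic expansion \eqref{e.asymptotic} is available and gives everything, and the small-argument regime, where one argues by compactness and the fact that $\hat\sigma_1 = 2\pi J_0(2\pi|\xi|)$ has only simple zeros. First I would treat the range $|\xi|$ large. Write $r=|\xi|$. From \eqref{e.asymptotic} we have $\hat\sigma_1(r) = 2r^{-1/2}\cos(2\pi r - \tfrac{\pi}{4}) + E(r)$ with $|E(r)|\le C r^{-3/2}$ for $r\ge r_0$. If $r$ avoids all the annuli $A_w(\beta_k)$, then by the mean value theorem (the zeros of $\cos(2\pi r - \pi/4)$ are exactly the $\beta_k$, and the derivative of $\cos(2\pi r - \pi/4)$ is bounded below in modulus by, say, $\pi$ on a fixed-size neighborhood once $w$ is small) we get $|\cos(2\pi r - \pi/4)| \gtrsim_w w$, hence $|2r^{-1/2}\cos(2\pi r-\pi/4)| \gtrsim_w w\, r^{-1/2}$. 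Choosing $c(w)\ge r_0$ large enough that $C r^{-3/2} \le \tfrac12 \cdot 2 w\, r^{-1/2}$ for $r\ge c(w)$ — i.e. $c(w) \gtrsim w^{-1}$ — the error term is absorbed and $|\hat\sigma_1(r)| \gtrsim_w r^{-1/2}$, giving $|\hat\sigma_1(r)|^2 \gtrsim_w r^{-1}$ there.

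Next I would handle the bounded range $\tfrac{1}{2\pi}\le r \le c(w)$ — actually all of $0\le r\le c(w)$, since $\hat\sigma_1(0)=2\pi\ne0$. On the compact set $\{r: 0\le r \le c(w)\}$ the function $r\mapsto \hat\sigma_1(r) = 2\pi J_0(2\pi r)$ is real-analytic and not identically zero, so it has finitely many zeros; call them $\gamma_1<\cdots<\gamma_M$ (these depend on $w$ only through $c(w)$). Each $\gamma_j$ is a simple zero of $J_0$ (a standard fact: $J_0$ and $J_0'$ have no common zero, since $J_0$ solves a second-order ODE with $J_0(0)=1$), so $\hat\sigma_1$ is bounded away from zero on the complement of $\cup_j A_w(\gamma_j)$ inside $\{r\le c(w)\}$, by continuity and compactness. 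This yields the constant lower bound $|\hat\sigma_1(r)|^2 \gtrsim_w 1$ on that region, with a constant $c(w)>0$ depending on $w$.

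The one subtlety to get right — and the main thing that needs care rather than being the "hard part" — is the bookkeeping around the threshold $c(w)$: the asymptotic argument forces $c(w)$ to grow like $w^{-1}$ as $w\to0$, and I would simply take $c(w)$ to be the maximum of the threshold demanded by the error-term absorption and, say, $\tfrac{7}{2\pi}$, and then note that the finitely many zeros $\gamma_j$ of $J_0(2\pi\cdot)$ in $[0,c(w)]$ are precisely $\{\beta_0,\dots,\beta_{M-1}\}$ shifted by the $O(r^{-1})$ corrections, but for the statement it suffices to call them $\gamma_j$ without identifying them. I expect no real obstacle here: both regimes are classical, and the lemma is essentially a quantitative restatement of "$\hat\sigma_1$ has simple zeros and decays like $r^{-1/2}$ between them." The only place to be slightly careful is ensuring the $A_w$-neighborhoods used in the large regime ($A_w(\beta_k)$) and the bounded regime ($A_w(\gamma_k)$) together cover all zeros of $\hat\sigma_1$ on $\{r > c(w)\}$ and $\{r\le c(w)\}$ respectively, which holds because every large zero of $J_0(2\pi r)$ lies within $O(r^{-1}) \ll w$ of some $\beta_k$ once $r>c(w)\gtrsim w^{-1}$ — exactly the content of \eqref{e.asymptotic} combined with the mean value theorem.
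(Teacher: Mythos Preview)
Your argument is correct and follows essentially the same route as the paper: use the asymptotic expansion \eqref{e.asymptotic} together with a linear lower bound on $|\cos(2\pi r-\pi/4)|$ away from the $\beta_k$ to handle $r>c(w)$, and use compactness on $\{r\le c(w)\}$ for the remaining part. The only cosmetic differences are that the paper squares first and arrives at $c(w)\simeq w^{-2}$ rather than your $c(w)\simeq w^{-1}$, and that your side remarks about simplicity of the zeros of $J_0$ and about the true zeros of $\hat\sigma_1$ lying inside $\cup_k A_w(\beta_k)$ are not actually needed for the lemma as stated.
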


\begin{proof} 
	By \eqref{e.asymptotic} there exist constants $c_1,c_2>0$ such that for $|\xi|>c_1$ we have

	\begin{align*}
		|\hat \sigma_1 (|\xi|)|^2&\gtrsim \frac{1}{|\xi|}\bigg( \Abs{\cos(2\pi |\xi|-\frac{\pi}{4})} ^2 -\frac{c_2}{|\xi|} \bigg).
	\end{align*}
Now the minimum of the cosine term in the region $\{|\xi|>c_1\}\setminus \cup_k A_w(\beta_k) $ is obviously achieved when $||\xi|-\beta_k|=w$ for some $k$.	If $w<\frac{1}{4}$ we have
$$\Abs{\cos(2\pi |\xi|-\frac{\pi}{4})}\geq 4 \Abs{|\xi|-\beta_k}=4w$$
We can thus estimate
	\begin{align*}
	|\hat \sigma_1(|\xi|)|^2 & \gtrsim \frac{1}{|\xi|} \bigg (16w^2 -\frac{c_2}{|\xi|}\bigg) \gtrsim_w \frac{1}{|\xi| },
	\end{align*}
	whenever $|\xi|>\frac{c_2}{8w^2}\eqdef c(w)$ and $\xi\notin \cup_k A_w(\beta_k)$.
	
Now there are finitely many roots of $\hat \sigma_1(\xi)$ in the ball $\{|\xi|\leq c(w)\}$ and let us denote them by $\gamma_1<\gamma_2<\cdots<\gamma_M$. By compactness we have that $|\hat \sigma_1(\xi)|^2 \gtrsim_w 1$ whenever $|\xi|\leq c(w)$ and $x\notin \cup_k A_w(\gamma_k)$. In order to make sure that all the annuli are non-overlapping we have to take $w<\min\{\frac{1}{8},\frac{1}{2}\min_k(\gamma_{k+1}-\gamma_k),\beta_0-\gamma_M\}\eqdef w_0$.
 \end{proof}

Lemma \ref{l.lowerestimate} can be used to obtain a favorable estimate for $D_t(f,2)$ as follows. Adopting the notations of Lemma \ref{l.lowerestimate} and invoking Plancherel's theorem we write for every $w<w_o$ small enough (remember $t\simeq N$)
	\begin{align*}
	D_t(f_N,2)^2  &\gtrsim  \frac{1}{N^2} \int_{ \{|\xi|<c(w) \}\setminus \cup_k A_w(\gamma_k) } |\widehat {f_N}(\xi/t)|^2 |\hat \sigma_1 (|\xi|)|^2d\xi + \frac{1}{N^2}\int_{\{c(w)<|\xi|<t)\}\setminus \cup_k A_w(\beta_k)} |\widehat {f_N}(\xi/t)|^2 |\hat\sigma_1(|\xi|)|^2d\xi \\
&\gtrsim_w \frac{1}{N^2} \int_{ \{|\xi|<c(w) \}\setminus \cup_k A_w(\gamma_k) }  |\widehat{f_N}(\xi/t)|^2 d\xi +  \frac{1}{N^2}\int_{\{c(w)<|\xi|<t)\}\setminus \cup_k A_w(\beta_k)}   |\widehat {f_N}(\xi/t)|^2 \frac{1}{|\xi|}d\xi.
\end{align*}
Setting $E_w\eqdef ( \cup_k A_w(\gamma_k)) \cup (  \cup_k A_w(\beta_k)  )$ and combining the previous estimates we have
\begin{align}\label{e.estimatewithholes}
D_t(f_N,2)^2&\gtrsim_w \frac{1}{tN^2} \int_{\{|\xi|<t \}\setminus E_w}  |\widehat {f_N}(\xi/t)|^2 d\xi =  \frac{t}{N^2}\int_{B(0,1)\setminus \frac{1}{t}E_w} |\widehat {f_N}(\xi)|^2 d\xi .
\end{align}

The following Poincar\'e-type inequality will allow us to show that the $L^2$ norm of $\widehat{f_N}$ on $B(0,1)\setminus \frac{1}{t} E_w$ is comparable to the the full $L^2$ norm of $\widehat {f_N}$.	
	
\begin{proposition}\label{p.poincare}
	For any positive integer $d\geq 1$ let $B=B(0,R)\subset \mathbb R^d$ be a Euclidean ball in the $d$-dimensional Euclidean space, centered at the origin, and $g\in  C^1(B)$. Suppose that  $0<\beta_1<\beta_2<\cdots<\beta_N<R$. We set $\beta_0\eqdef 0$ and $\beta_{N+1}\eqdef R$ and
	$$\beta\eqdef \min_{1\leq n\leq N+1 }(\beta_{n}-\beta_{n-1}).$$
For $k=1,2,\ldots,N$,		 we set
$$A_w(\beta_k)\eqdef \{\xi\in\mathbb R^d: | |\xi|-\beta_k| < w\}.$$
Then for $0<w<\beta/3$ we have that
		$$ \int_{ B}|g(x)|^2 \lesssim \int_{ B\setminus (\cup_{n=1} ^N A_w(\beta_n))  }|g(x)|^2+ w^2\int_{ B} |\nabla g(x)|^2 .$$
\end{proposition}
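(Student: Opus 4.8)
The plan is to reduce the $d$-dimensional statement to a one-dimensional Poincaré inequality on each radial ray, then integrate over the sphere. Writing $x = \rho\omega$ with $\rho = |x| \in [0,R]$ and $\omega \in S^{d-1}$, and using polar coordinates $dx = \rho^{d-1}\, d\rho\, d\omega$, the set $A_w(\beta_n)$ is exactly $\{\rho\omega : |\rho - \beta_n| < w\}$, so for each fixed $\omega$ the ``bad set'' in the $\rho$ variable is the union of intervals $(\beta_n - w, \beta_n + w)$, which by the hypothesis $w < \beta/3$ are pairwise disjoint and contained in $(0,R)$. First I would fix $\omega$ and set $h(\rho) \eqdef g(\rho\omega)$, a $C^1$ function on $[0,R]$ with $|h'(\rho)| \leq |\nabla g(\rho\omega)|$.

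The key one-dimensional estimate is: if $h \in C^1([0,R])$ and $(\beta_n - w, \beta_n + w)$, $n=1,\dots,N$, are disjoint subintervals of $(0,R)$ whose complement within each ``cell'' $[\beta_{n-1}+w \text{ or } 0,\ \beta_n - w]$ retains length at least, say, $w$ (this is where $w < \beta/3$ is used: each gap between consecutive bad intervals has length $\geq \beta - 2w > w$), then
$$ \int_{\beta_n - w}^{\beta_n + w} |h(\rho)|^2\, d\rho \lesssim \int_{J_n} |h(\rho)|^2\, d\rho + w^2 \int_{\beta_n - 2w}^{\beta_n + 2w} |h'(\rho)|^2\, d\rho, $$
where $J_n$ is an adjacent good interval of length $\geq w$. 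This is the standard argument: for $\rho$ in the bad interval and $s$ in the adjacent good interval, $h(\rho) = h(s) + \int_s^\rho h'$, so by Cauchy--Schwarz $|h(\rho)|^2 \lesssim |h(s)|^2 + w\int |h'|^2$ over the relevant range; averaging over $s \in J_n$ and integrating over $\rho$ in the bad interval gives the claim, the factor $w^2$ coming from one power of $w$ from the length of the bad interval and one from the Cauchy--Schwarz gap. Summing over $n$, and arranging the good intervals $J_n$ to be pairwise disjoint (choose them inside distinct cells, which is possible since consecutive bad intervals are separated by gaps of length $> w$), yields
$$ \int_{\cup_n (\beta_n-w,\beta_n+w)} |h|^2\, d\rho \lesssim \int_{[0,R]\setminus \cup_n(\beta_n-w,\beta_n+w)} |h|^2\, d\rho + w^2 \int_0^R |h'|^2\, d\rho. $$
Adding $\int_{\text{good}} |h|^2$ to both sides gives the full interval inequality $\int_0^R |h|^2 \lesssim \int_{\text{good}} |h|^2 + w^2\int_0^R |h'|^2$.

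To pass back to $d$ dimensions I would multiply the one-dimensional inequality by $\rho^{d-1}$ before integrating in $\rho$; since $\rho^{d-1}$ is bounded on $[0,R]$ and the argument only shifts mass between nearby radii, the weight can be absorbed into the implied constant (alternatively, incorporate $\rho^{(d-1)/2}$ into $h$ from the start, noting its derivative is controlled on the region $\rho \gtrsim w$ and handling a neighborhood of the origin separately, where there are no bad intervals by $w < \beta_1$). Then integrating over $\omega \in S^{d-1}$ and using $|h'(\rho)| \leq |\nabla g(\rho\omega)|$ converts the three terms into $\int_B |g|^2$, $\int_{B \setminus \cup_n A_w(\beta_n)} |g|^2$, and $w^2 \int_B |\nabla g|^2$ respectively, which is the assertion.

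The main obstacle is bookkeeping in the one-dimensional step: making sure the good intervals $J_n$ used to control distinct bad intervals can be chosen pairwise disjoint (so that summing does not overcount $\int_{\text{good}}|h|^2$) and that the gradient integrals over the slightly enlarged intervals $(\beta_n - 2w, \beta_n + 2w)$ also have bounded overlap — both follow from $w < \beta/3$, which guarantees the enlarged intervals are still disjoint and each sits in its own cell with room to spare. The weight $\rho^{d-1}$ and the behavior near $\rho = 0$ are minor technical points handled as indicated above.
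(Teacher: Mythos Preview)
Your proposal is correct and follows essentially the same argument as the paper: both use the fundamental theorem of calculus along radial rays together with Cauchy--Schwarz to bound $|g|^2$ on a bad annulus by $|g|^2$ on an adjacent good annulus of width $\sim w$ plus $w^2|\nabla g|^2$, then sum over $n$ using the separation $w<\beta/3$. The only organizational difference is that the paper works directly in $d$ dimensions and averages over the annulus width $s\in[w,2w)$ (which yields the comparison annulus $\beta_n-2w<|\xi|<\beta_n-w$ and handles the Jacobian $\rho^{d-1}$ via $r\simeq t\simeq\beta_n$ in one stroke), whereas you reduce to a one-dimensional inequality first and then reinsert the weight---but your observation that ``the argument only shifts mass between nearby radii'' is exactly the $r\simeq\beta_n$ step in the paper, so the two are the same in substance.
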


\begin{proof}
	We first focus on a single annulus $A_s(\beta_n)$ for some $1\leq n \leq N$ and some real parameter $s$ in the interval $[w,2w)$. For $\beta_n-s<r\leq \beta_n+s$ and $u\in S^{d-1}$ we have that
	$$g(ru) = g((\beta_n-s)u)+ \int_{\beta_n-s} ^r \partial_t( g(tu)) dt.$$	

	Using the simple inequality $\frac{1}{2}(a+b)^2\leq a^2+b^2$ for $a,b\in\mathbb R$, and the Cauchy-Schwarz inequality we conclude that 
	$$|g(ru)|^2 \lesssim |g((\beta_n-s)u)|^2+ 2s \int_{\beta_n-s} ^{\beta_n+s} |\partial_t( g(tu))|^2 dt.$$

	Multiplying by $r^{d-1}$ and integrating for $r\in[\beta_n-s,\beta_n+s)$ and $u\in S^{d-1}$, we get
	\begin{align}
		\int_{A_s(\beta_n)} |g(x)|^2dx &\lesssim \int_{\beta_n-s} ^{\beta_n+s} r^{d-1}dr \int_{S^{d-1}}|g((\beta_n-s)u)|^2 d\sigma^{(d-1)} _1 (u)\\
		&+ 2s \int_{S^{d-1}}\bigg(\int_{\beta_n-s} ^{\beta_n+s}\Big(\int_{\beta_n-s} ^{\beta_n+s} |\partial_t( g(tu))|^2 dt\Big) r^{d-1}dr\bigg) d\sigma_1 ^{(d-1)}(u).
	\end{align}
	Now observe that for $w<s\leq 2w$ and $r,t\in[\beta_n-s,\beta_n+s)$, we have that $r\simeq t \simeq \beta_n$. Hence,
	$$\int_{A_s(\beta_n)} |g(x)|^2dx \lesssim s \beta_n ^{d-1}\int_{S^1}|g((\beta_n-s)u)|^2 du+ s^2 \int_{A_s (\beta_n)} |\nabla  g(x) |^2dx.$$
	Integrating the left hand side for $s\in[w,2w)$ we see that
	$$\int_w ^{2w} \int_{A_s(\beta_n)}|g(x)|^2dx ds \gtrsim w \int_{A_w(\beta_n)}|g(x)|^2dx.$$
	On the other hand
	\begin{align}  \beta_n ^{d-1} \int_w ^{2w}  \int_{S^1}|g((\beta_n-s)u)|^2 s du  ds & \lesssim w \int _{B(0,\beta_n-w) \setminus \ B(0,\beta_n-2w)}|g(x)|^2 dx
		\\ &\leq w\int_{B(0,\beta_n-w) \setminus \ B(0,\beta_{n-1}+w)}|g(x)|^2dx,
	\end{align}
	since $w<\beta/3$. Finally we readily see that
	$$\int_w ^{2w} s^2 \int_{A_s(\beta_n) } |\nabla g(x)|^2 dx ds \lesssim  w^3 \int_{A_{2w}(\beta_n)} |\nabla g(x)|^2 dx.$$
	Putting these estimates together get
	$$ w \int_{A_w(\beta_n)}|g(x)|^2dx \lesssim  w\int_{B(0,\beta_n-w) \setminus \ B(0,\beta_{n-1}+w)}|g(x)|^2dx +w^3 \int_{A_{2w}(\beta_n)} |\nabla g(x)|^2 dx.$$
Observe that we have
	 $$\cup_{n=1} ^N {B(0,\beta_n-w) \setminus \ B(0,\beta_{n-1}+w)} \subseteq   B\setminus \big(\cup_{n=1} ^N A_w (\beta_n) \big)$$
	and the unions on both sides of the inclusion above are disjoint. Summing in $n$ we thus get
		$$\int_{\cup_{n=1} ^N  A_w(\beta_n)}|g(x)|^2dx \lesssim  \int_{B\setminus \cup_{n=1} ^N A_w (\beta_n) }|g(x)|^2dx +w^2 \int_{\cup_{n=1} ^N A_{2w}(\beta_n)}|\nabla g(x)|^2 dx.$$
	Adding the term $\int_{B\setminus  \cup_{n=1} ^N A_w (\beta_n) }|g(x)|^2dx$ in both sides of the inequality completes the proof.\end{proof}	

Now Proposition \ref{p.poincare} and estimate \eqref{e.estimatewithholes} will allow us to conclude the proof of Theorem \ref{t.singleradiusN}:

\begin{proof}[Proof of Theorem \ref{t.singleradiusN}] Estimate \eqref{e.estimatewithholes} and Proposition \ref{p.poincare} imply that
\begin{align}\label{e.break}
	D^2 _t(f_N,2) & =\frac{1}{t}\int_{B(0,1)\setminus \frac{1}{t}E_w} |\widehat {f_N}(\xi)|^2 d\xi   \gtrsim_w \frac{1}{t}\bigg( \int_{B(0,1)}|\widehat{f_N}(\xi)|^2d\xi -\frac{w^2}{t^2} \int_{B(0,1)}|\nabla{\widehat{f_N}}(\xi)|^2d\xi\bigg).
\end{align}
Using the bounds
$$\int_{B(0,1)}|\widehat f_N(\xi)|^2d\xi\gtrsim\int_{[-\frac{1}{2},\frac{1}{2}]^2}\bigg|\sum_{j,k=0} ^{N-1}z_{jk} e^{2\pi i (j\xi_1+k\xi_2)}\bigg|^2d\xi\geq N^2,$$
and
$$ \int_{B(0,1) }|\nabla{\widehat{f_N}}(\xi)|^2d\xi\leq \int_{\mathbb R^2} |x|^2|f_N(x)|^2dx\lesssim N^4$$
in estimate \eqref{e.break} we get
\begin{align*}
	D^2 _t(f_N,2)	\gtrsim\frac{1}{t} \big(N^2-\frac{w^2}{t^2} N^4\big)\simeq t (1-c w^2),
\end{align*}
for some constant $c>0$. If $w$ is sufficiently small we conclude that $D_t(f_N,2)\gtrsim_w \sqrt{t}$ as we wanted to prove. 
\end{proof}

\begin{remark} The calculations in this section show that
		$$D_t(f_N,2)\gtrsim \sqrt{t},$$
for $N\simeq t$. This alone is not enough to conclude the existence of a \emph{full circle} with large discrepancy $\sim \sqrt{t}$. Indeed, the argument used in the proof of Theorem \ref{t.fullcircleN} requires the validity of the previous estimate for $N\gtrsim t^2$ while, here, we only have it for $N\simeq t$.
\end{remark}

\section{Discrepancy with respect to general sets}\label{s.generalsets}
In this section we study the discrepancy of a coloring $f$ of the plane with respect to more general families of sets. To keep the exposition relatively simple let us assume that $S$ is a simple, closed, piecewise $C^1$ curve in the Euclidean plane and let $K$ denote its interior. Let $d\sigma_S$ denote the arc-length measure on $S$. In the previous sections we have studied the discrepancy of $f$ with respect to the family of all dilations and translations of the unit circle. Here, the relevant families are
$$\{x+r\tau K:x\in\mathbb R^2,\ r>0,\ \tau\in SO(2)\},$$
and
$$\{x+r\tau S:x\in\mathbb R^2,\ r>0,\ \tau\in SO(2)\}.$$
Note that we introduce rotations which was superfluous in the case of the circle. Here however it is absolutely essential. Indeed, consider the standard chessboard-like alternating coloring (i.e. adjacent squares have different colors) and let $K$ be the unit square with its sides parallel to the coordinate axes. Obviously the discrepancy of this coloring with respect to the dilations and translations of $K$ (or $\partial K$) is $\sim 1$ so the problem is trivial. Another option would be to place certain assumptions on the curvature of $\partial K$ but we will not pursue this here.

For $x\in\mathbb R^2,r>0$ and $\tau \in SO(2)$ we define
$$D_K(f_N,x,\tau,r)\eqdef (f*\chi_{r\tau K})(x)$$
and
$$D_S(f_N,x,\tau,r)\eqdef (f*d\sigma_{r\tau S})(x).$$

\subsection{Average estimates for the Fourier transform.}\label{s.lowerfourier}
We will obtain lower bounds on the discrepancies described above by studying their $L^2 $ averages. The most important ingredient of this approach is the following lemma describing the average asymptotic behavior of the Fourier transform of $d\sigma_S$ and $\chi_K$. These estimates are essentially contained in the proof of \cite[Theorem 3, Chapter 6]{M}.
\begin{lemma}\label{l.fourierKS} Let $S$ be a simple, closed, piecewise $C^1$ curve in the Euclidean plane and denote by $K$ its interior so that $S=\partial K$. There exist numerical constants $A_o>1$ and $R_o>0$ such that, if $R>R_o$ and $A>A_o$, then
$$\int_{R\leq |\xi|\leq AR} |\widehat{\chi_K}(\xi)|^2d\xi \gtrsim_A \frac{|S|}{R},$$
and 
$$\int_{R\leq |\xi|\leq AR} |\widehat{d\sigma_S}(\xi)|^2 d\xi \gtrsim_A |S| R.$$
Here $|S|$ denotes the arc-length of $S$.
\end{lemma}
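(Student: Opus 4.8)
## Proof proposal for Lemma \ref{l.fourierKS}

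The plan is to reduce both estimates to the asymptotic behavior of $\widehat{d\sigma_S}$ and $\widehat{\chi_K}$ along individual rays, integrated over the sphere of directions, using the classical stationary-phase expansion for the Fourier transform of surface measure on a curve. First I would recall that for a simple closed piecewise $C^1$ curve $S$ with interior $K$, the Fourier transform $\widehat{d\sigma_S}(\rho\omega)$ for a fixed direction $\omega\in S^1$ decays like $\rho^{-1/2}$ with an oscillatory main term: contributions come from the (generically two) points on $S$ where the outer normal is parallel to $\pm\omega$, and away from a measure-zero set of bad directions (flat pieces, corners) one has
$$
\widehat{d\sigma_S}(\rho\omega)=\sum_{j} a_j(\omega)\,\rho^{-1/2}e^{2\pi i \rho\, \phi_j(\omega)}+O_\omega(\rho^{-1}),
$$
with $|a_j(\omega)|$ bounded below by a positive quantity depending only on the local curvature at the relevant boundary points. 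The relation $\widehat{\chi_K}$ and $\widehat{d\sigma_S}$ are linked through the divergence theorem: $\widehat{\chi_K}(\xi)$ equals, up to the factor $2\pi i|\xi|$ and a bounded-direction-dependent factor, essentially $\widehat{d\sigma_S}(\xi)$ along the normal direction, so that $|\widehat{\chi_K}(\rho\omega)|^2 \simeq \rho^{-2}|\widehat{d\sigma_S}(\rho\omega)|^2$ for a.e.\ $\omega$; this accounts for the extra $R^{-2}$ discrepancy between the two displayed bounds.

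The key step is then an averaging argument in the radial variable to kill the oscillation. Writing in polar coordinates $\xi=\rho\omega$, I would estimate
$$
\int_{R\le|\xi|\le AR}|\widehat{d\sigma_S}(\xi)|^2\,d\xi=\int_{S^1}\int_R^{AR}|\widehat{d\sigma_S}(\rho\omega)|^2\,\rho\,d\rho\,d\omega,
$$
and for each fixed good direction $\omega$ lower-bound the inner integral. Plugging in the asymptotic expansion, the main term contributes $\sum_j |a_j(\omega)|^2\int_R^{AR}\rho^{-1}\rho\,d\rho$ plus oscillatory cross terms $\int_R^{AR} e^{2\pi i\rho(\phi_j-\phi_k)}\,d\rho$ which are $O(R/(\text{gap}))$, hence negligible once $A$ is taken large enough that $(A-1)R$ dominates; and the error term contributes $O(\int_R^{AR}\rho^{-2}\rho\,d\rho)=O(\log A)$, also absorbed for $R>R_o$. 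This yields $\int_R^{AR}|\widehat{d\sigma_S}(\rho\omega)|^2\rho\,d\rho\gtrsim (A-1)R\sum_j|a_j(\omega)|^2$ for good $\omega$. Integrating over $S^1$ and using that $\int_{S^1}\sum_j|a_j(\omega)|^2\,d\omega$ is comparable to $|S|$ (each arc of $S$ contributes, via its normal map, an amount proportional to its length, by a change of variables from the arclength parameter to the normal angle) gives the second bound $\gtrsim_A |S| R$; the first bound follows identically after inserting the $\rho^{-2}$ factor relating $\widehat{\chi_K}$ to $\widehat{d\sigma_S}$.

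The main obstacle I anticipate is making the lower bound $\int_{S^1}\sum_j|a_j(\omega)|^2\,d\omega\gtrsim|S|$ precise and uniform: for a merely piecewise $C^1$ curve the normal direction need not vary injectively or even continuously, flat segments produce directions where the stationary-phase phase is degenerate (giving faster decay, hence no contribution), and corners must be handled separately. The cleanest route, and the one I would take, is to invoke the argument already carried out in \cite[Theorem 3, Chapter 6]{M} essentially verbatim — there the same oscillatory-integral bookkeeping is done for exactly this purpose — and simply note that the set of degenerate directions has measure zero and the remaining arcs of positive total length $\simeq |S|$ each contribute a positive curvature-weighted amount, so the spherical average is bounded below by a constant times $|S|$. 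A secondary technical point is uniformity of the error terms $O_\omega(\rho^{-1})$ in $\omega$, which holds on the good set with constants depending on lower bounds for the curvature; restricting to a slightly smaller good set of directions where curvature is bounded below fixes this while still capturing a proportion $\simeq|S|$ of the length.
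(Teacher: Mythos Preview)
Your approach is genuinely different from the paper's, and it has a real gap at the stated regularity.

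The paper does not do direction-by-direction stationary phase at all. For the first inequality it quotes Montgomery's integral asymptotic
\[
\int_0^R g(r)\,r^5\,dr \;\simeq\; |S|\,R^3,\qquad g(r)\eqdef\int_{S^1}|\widehat{\chi_K}(r\xi')|^2\,d\sigma_1(\xi'),
\]
valid for large $R$, and then simply subtracts the value at $R$ from the value at $AR$ to get $\int_R^{AR} g(r)r^5\,dr\gtrsim |S|R^3$, which after dividing out the extra $r^4$ gives the annular bound. For the second inequality the paper repeats Montgomery's computation with the Gaussian $h(x)=e^{-\pi R^2|x|^2}$, uses Green's theorem to pass from $\chi_K*\nabla^2 h$ to a boundary integral against $d\sigma_S$, and obtains a Laplace-type asymptotic $\int_0^\infty y(r)e^{-2\pi r^2/R^2}r^3\,dr\simeq |S|R^3$; the Hardy--Littlewood Tauberian theorem then converts this to $\int_0^R y(r)r^3\,dr\simeq |S|R^3$, and the same subtraction finishes. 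No pointwise asymptotic for $\widehat{d\sigma_S}(\rho\omega)$ is ever needed or claimed.

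Your plan, by contrast, rests on the expansion $\widehat{d\sigma_S}(\rho\omega)=\sum_j a_j(\omega)\rho^{-1/2}e^{2\pi i\rho\phi_j(\omega)}+O_\omega(\rho^{-1})$. That expansion requires nondegenerate stationary points of the phase, i.e.\ points of $S$ with nonvanishing curvature and hence at least $C^2$ regularity. Under the hypothesis ``piecewise $C^1$'' this simply need not hold: a polygon has no such points, and $\widehat{d\sigma_S}(\rho\omega)$ is $O(\rho^{-1})$ for generic $\omega$ and $O(1)$ along the finitely many directions perpendicular to the edges. Your coefficients $a_j(\omega)$ do not exist, and restricting to a ``good set where curvature is bounded below'' can leave you with an empty set. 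The strength of Montgomery's argument (and the paper's use of it) is precisely that the \emph{angular average} $g(r)$, resp.\ $y(r)$, has the right size asymptotically even when no pointwise stationary-phase picture is available; this is what the Gaussian-smoothing plus Tauberian step buys.

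A second, smaller issue: the relation you write, $|\widehat{\chi_K}(\rho\omega)|^2\simeq\rho^{-2}|\widehat{d\sigma_S}(\rho\omega)|^2$, is not correct pointwise. The divergence theorem gives $\widehat{\chi_K}(\xi)=(2\pi i|\xi|^2)^{-1}\int_S e^{-2\pi i y\cdot\xi}(\xi\cdot n(y))\,d\sigma_S(y)$, with the factor $\xi\cdot n(y)$ inside the integral, so one is comparing $\widehat{d\sigma_S}$ with the transform of the \emph{weighted} measure $(\omega\cdot n)\,d\sigma_S$. The paper handles this by keeping the weight $|\xi'\cdot n|^2$ in the definition of $y(r)$ and bounding $y(r)\le\int_{S^1}|\widehat{d\sigma_S}(r\xi')|^2\,d\sigma_1(\xi')$; the inequality goes the right way for the lower bound they need.

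In short: your outline would work, with some repairs, for smooth strictly convex $S$, but not for the piecewise $C^1$ class in the statement. The paper's route via averaged asymptotics and a Tauberian theorem is what makes the lemma go through at that generality.
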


\begin{proof}We follow Montgomery from \cite[Theorem 3, Chapter 6]{M}. For $r>0$ we set
	$$g(r)=\int_{S^1}|\widehat{\chi_K}(r\xi')|^2d\sigma_1(\xi').$$
Under our assumptions on $K$, Montgomery proves the asymptotic estimate
$$\int_0 ^R g(r)r^5 dr\simeq |S|R^3,$$
as $R\to +\infty$. 	This means that there exist numerical constants $R_o,c_1,c_2>0$ such that
$$c_1 |S| R^3 \leq \int_0 ^R g(r)r^5 dr \leq c_2|S|R^3,$$
whenever $R>R_o$. For $A>1$ and $R>R_o$ we thus have

\begin{align*}
	\int_{R} ^{AR} g(r) r^5 dr \geq |S| R^3 (A^3c_1 -c_2) \gtrsim |S| R^3
\end{align*}
if $A>A_0$ where $A_o>1$ is a numerical constant. We conclude that
\begin{align}
 \int_{R\leq |\xi|\leq AR}  |\widehat{\chi_K}(\xi)|^2 d\xi= \int_R ^{AR} g(r) r dr \gtrsim_A |S|/R,
\end{align}
whenever $R>R_o$ and $A>A_o$. This proves the first estimate of the lemma. 

For the second we modify the proof of \cite[Theorem 3, Chapter 6]{M}. With $h(x)=e^{-\pi R^2|x|^2}$ Montgomery shows that $ \|\chi_K*\nabla^2 h\|_2 ^2\simeq |S| / R  $ as $R\to +\infty$. On the other hand, by Green's theorem we have
$$\chi_K*\nabla^2 h=\int_S \frac{\partial h}{\partial\textbf n}(x-y) d\sigma_S(y).$$
Combining these two facts and using Plancherel's theorem we get
\begin{align*}
\frac{|S|}{R}&\simeq \int_{\mathbb R^2}\big| \widehat{\frac{\partial h}{\partial\textbf n}}(\xi)\big|^2	|\widehat{d\sigma_S}(\xi)|^2d\xi=\int_{\mathbb R^2}|\xi\cdot n|^2|\hat h(\xi)|^2 |\widehat{d\sigma_S}(\xi)|^2d\xi \\
&= \int_0 ^{+\infty}  |\hat h(r)|^2 \bigg( \int_{S^1} |\xi'  \cdot n|^2 |\widehat {d\sigma_S}( r \xi')|^2d\sigma(\xi') \bigg)\, r^3 dr,
\end{align*}
where $\hat h(r)=\hat h(|\xi|)=R^{-2}e^{-\pi R^{-2}r^2}$. Let us call
\begin{align}\label{e.y(r)}
	y(r)\eqdef  \int_{S^1} |\xi ' \cdot n|^2 |\widehat{d\sigma_S}( r \xi')|^2d\sigma(\xi') \leq \int_{S^1}  |\widehat{d\sigma_S}( r \xi')|^2d\sigma(\xi')  .
\end{align}
We have
$$\int_0 ^{+\infty} y(r) e^{-2\pi r^2/R^2} r^3 dr \simeq |S| R^3.$$
As in Montgomery \cite{M}, we use the Hardy-Littlewood Tauberian theorem \cite[Theorem 108]{HL} to conclude that
$$\int_0 ^R y(r) r^3 dr\simeq |S| R^3, $$
as $R\to+\infty$. Arguing as in the first part of the proof we conclude that there exist numerical constants $R_o$ and $A_o>1$ such that
$$\int_{R} ^{AR} y(r) r^3dr \gtrsim  |S| R^3,$$
whenever $R>R_o$ and $A>A_o$. By \eqref{e.y(r)} we conclude that
\begin{align*}
	\int_{R\leq |\xi|\leq AR} |\widehat{d\sigma_S}(\xi)|^2 d\xi  \geq \int_R ^{AR} y(r)r dr\gtrsim_A|S|R,
\end{align*}
as we wanted to prove.
\end{proof}

\subsection{Lower bounds for discrepancy with respect to general sets} Using the average estimates for the Fourier transform of $\chi_K$ and $d\sigma_S$ proved in the previous paragraph we can now show the desired lower bounds for the (average) discrepancy.

\begin{theorem}\label{t.segment} Let $S$ be a simple, closed, piecewise $C^1$ curve and denote by $K$ its interior.
	\begin{list}{}{}
		\item[(i)] For every positive integer $N$ there exists a $x\in Q_N$, a dilation $r\simeq N$ and a rotation $\tau\in SO(2)$ such that
	$$D_K (f_N,x,r,\tau)  \gtrsim_K \sqrt{N},$$
where the implied constant depends only on $K$. 
\item [(ii)]  For every positive integer $N$ there exists a $x\in Q_N$, a dilation $r\simeq N$ and a rotation $\tau\in SO(2)$ such that
$$D_S (f_N,x,r,\tau)  \gtrsim_K \sqrt{N},$$
where the implied constant depends only on $S=\partial K$. 
\end{list}
\end{theorem}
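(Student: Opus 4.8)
The plan is to prove both parts of Theorem~\ref{t.segment} simultaneously by the same $L^2$-averaging scheme used for circles, replacing Lemma~\ref{l.double} (and its corollary) by the average Fourier estimates of Lemma~\ref{l.fourierKS}. Concretely, for part (i) I would define the averaged $L^2$ discrepancy
\begin{equation*}
D_K(f_N,2)^2 \eqdef \frac{1}{N^2}\int_{SO(2)}\int_{\mathbb R^2}\abs{(f_N*\chi_{r\tau K})(x)}^2\,dx\,d\tau,
\end{equation*}
for a suitable dilation $r\simeq N$, and similarly with $d\sigma_S$ in place of $\chi_K$ for part (ii). Since a supremum dominates an average, it suffices to show $D_K(f_N,2)\gtrsim_K\sqrt N$ and $D_S(f_N,2)\gtrsim_K\sqrt N$. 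By Plancherel in $x$ and the fact that $\widehat{\chi_{r\tau K}}(\xi)=r^2\widehat{\chi_K}(r\tau^{-1}\xi)$ (and $\widehat{d\sigma_{r\tau S}}(\xi)=r\,\widehat{d\sigma_S}(r\tau^{-1}\xi)$), one gets
\begin{equation*}
D_K(f_N,2)^2=\frac{r^4}{N^2}\int_{\mathbb R^2}\abs{\widehat{f_N}(\xi)}^2\Big(\int_{SO(2)}\abs{\widehat{\chi_K}(r\tau^{-1}\xi)}^2\,d\tau\Big)d\xi,
\end{equation*}
and the inner rotational average is exactly $g(r\abs\xi)$ in the notation of the proof of Lemma~\ref{l.fourierKS}, up to normalization of the measure on $S^1$.

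Next I would localize $\xi$ to the frequency annulus where $\widehat{f_N}$ carries its mass, namely a ball of radius $\simeq 1$ around the origin, just as in the circle case where only $\abs\xi\le\tfrac12$ was used via \eqref{e.1period}. Choosing $r\simeq N$ appropriately (say $r$ of the form $N/(2\pi)$ times a constant so that $r\abs\xi$ ranges over $[R_o, A_oR_o]$-type scales as $\abs\xi$ ranges over the relevant annulus), I would split $\int_0^\infty g(\rho)\rho^5\,d\rho$-type information from Lemma~\ref{l.fourierKS} into the dyadic pieces $\int_R^{AR}g(\rho)\rho\,d\rho\gtrsim_A\abs S/R$, and integrate this against $\abs{\widehat{f_N}}^2$. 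More carefully: writing the $\xi$-integral in polar coordinates and using that $\abs{\widehat{f_N}(\xi)}^2\gtrsim\abs{\sum_{j,k}z_{jk}e^{2\pi i(j\xi_1+k\xi_2)}}^2$ on $[-\tfrac12,\tfrac12]^2$ (estimate \eqref{e.1period}), together with $\sum|z_{jk}|^2=N^2$, one obtains after changing variables $\eta=r\xi$,
\begin{equation*}
D_K(f_N,2)^2\gtrsim\frac{r^4}{N^2}\cdot\frac{1}{r^2}\int_{c_1 N\le\abs\eta\le c_2 N}\abs{\widehat{f_N}(\eta/r)}^2\,\abs{\widehat{\chi_K}(\eta)}^2\,d\eta
\end{equation*}
— here one must be slightly careful because $\widehat{\chi_K}$ is not radial, but the rotational average fixes this and produces $g(\abs\eta)$. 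Then Lemma~\ref{l.fourierKS} gives $\int_{R\le|\eta|\le AR}|\widehat{\chi_K}(\eta)|^2\,d\eta\gtrsim_A |S|/R$ with $R\simeq N$, and combining with the lower bound $\int_{[-\frac12,\frac12]^2}|\widehat{f_N}|^2\gtrsim N^2$ yields $D_K(f_N,2)^2\gtrsim_K N$, hence $\gtrsim_K\sqrt N$ after taking square roots, as desired. Part (ii) is identical with the second estimate of Lemma~\ref{l.fourierKS}: the extra factor $r^2$ coming from $|\widehat{d\sigma_{r\tau S}}|^2\simeq r^2|\widehat{d\sigma_S}(r\tau^{-1}\xi)|^2$ versus the gain $R$ instead of $1/R$ in Lemma~\ref{l.fourierKS} balance to again give $\gtrsim_K N$, so $D_S(f_N,2)\gtrsim_K\sqrt N$.

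There is one genuine technical point, and it is where I expect the real work to be: reconciling the non-radiality of $\widehat{\chi_K}$ and $\widehat{d\sigma_S}$ with the frequency-localization of $\widehat{f_N}$. In the circle case $\widehat{\sigma_t}$ is radial, so one could localize $\widehat{f_N}$ and $\widehat{\sigma_t}$ to the same set independently. Here the rotational averaging over $\tau\in SO(2)$ is precisely what restores an effectively radial multiplier $g(\abs\eta)$, but one must make sure the double integral over $\tau$ and $\xi$ can be rearranged (Fubini, all integrands nonnegative, so this is fine) and that the change of variables $\eta=r\tau^{-1}\xi$ interacts correctly with the region $\abs\xi\lesssim 1$; since $\tau$ is a rotation it preserves $\abs\xi$, so $\abs\xi\le c \iff \abs{\tau^{-1}\xi}\le c$ and there is no distortion. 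A secondary point is ensuring the constants $R_o, A_o$ from Lemma~\ref{l.fourierKS} are compatible with "$r\simeq N$": one simply takes $N$ large enough that $R:=c_1 N>R_o$ and picks the dilation $r$ so that $A:=c_2/c_1>A_o$; for the finitely many small $N$ the statement is trivial since any coloring has discrepancy $\gtrsim 1$ with respect to a single suitably placed small copy of $K$. Finally one should note that the claim $x\in Q_N$ (rather than merely $x\in\mathbb R^2$) follows because $(f_N*\chi_{r\tau K})$ is supported within $O(N)$ of $Q_N$ and, as in the proof of Theorem~\ref{t.singleradiusN}, throwing away the boundary layer costs only a $(1-O(1/N))$ factor in the $L^2$ mass; alternatively one absorbs this into the constant since $Q_N$ already has side $N$ and one only needs $x$ in a region of comparable measure.
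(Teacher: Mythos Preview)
Your scheme has a genuine gap at the ``combining'' step. By averaging only over rotations with a \emph{fixed} dilation $r$, Plancherel gives
\[
D_K(f_N,2)^2 \simeq \frac{r^4}{N^2}\int_{\mathbb R^2}|\widehat{f_N}(\xi)|^2\, g(r|\xi|)\,d\xi,
\]
where $g(\rho)=\int_{S^1}|\widehat{\chi_K}(\rho\xi')|^2\,d\sigma_1(\xi')$ is the spherical average. To bound this integral below by factoring against $\int_{[-\frac12,\frac12]^2}|\widehat{f_N}|^2\gtrsim N^2$ you would need a \emph{pointwise} lower bound $g(\rho)\gtrsim_K \rho^{-3}$ for all $\rho$ in a range. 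Lemma~\ref{l.fourierKS} does not provide this: it only gives the annular integral bound $\int_R^{AR}g(\rho)\rho\,d\rho\gtrsim_A|S|/R$, which is perfectly compatible with $g$ being very small on large subsets (exactly as $|\hat\sigma_1|^2$ vanishes at the Bessel zeros in the circle case). You cannot multiply the two separate integral lower bounds $\int|\widehat{f_N}|^2\gtrsim N^2$ and $\int_{\text{annulus}}|\widehat{\chi_K}|^2\gtrsim|S|/R$ to bound $\int|\widehat{f_N}|^2 g$ from below; nothing prevents $|\widehat{f_N}|^2$ from concentrating where $g$ is small.

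The paper's proof fixes this by also averaging over $r\in[aN,\beta N]$. Then for each fixed $\xi$ the double $(r,\tau)$-average becomes a genuine two-dimensional integral over the annulus $\{a|\xi|N<|y|<\beta|\xi|N\}$, namely $\int_{a|\xi|N<|y|<\beta|\xi|N}|\widehat{d\sigma_S}(y)|^2\,dy$ (up to a factor $|\xi|^{-2}$ or $|\xi|^{-3}$), and \emph{this} is precisely the quantity Lemma~\ref{l.fourierKS} bounds below. The result is a pointwise-in-$\xi$ lower bound that can legitimately be integrated against $|\widehat{f_N}(\xi)|^2$. The paper also needs a separate argument for small $|\xi|$ (where the annulus radius falls below $R_o$), using that $\widehat{d\sigma_S}$ does not vanish identically on any annulus; your sketch does not address this region either.
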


\begin{remark} As in Theorem \ref{t.singleradiusN} we cannot guarantee that the sets $x+r\tau K$, $x+r\tau S$ of the previous theorem are fully contained in $Q_N$. Thus, Theorem \ref{t.segment} only implies the existence of \emph{a segment} of $K$ or $S$ which has large discrepancy with respect the coloring of the whole plane $f$.	
\end{remark}

In order to prove Theorem \ref{t.segment} we will consider the average discrepancy
$$D_K(f_N,2)^2\eqdef\frac{1}{N^3}\int_{SO(2)}\int_{a N} ^{\beta N}\bigg(  \int_{\mathbb R^2}D_{r\tau K}(f_N,x)^2 dx\bigg) \ dr d \tau ,$$
where $0<a<\beta$ will be appropriate numerical constants.
Similarly define
$$D_S(f_N,2)^2\eqdef\frac{1}{N^3}\int_{SO(2)}\int_{aN} ^{\beta N}\bigg(  \int_{\mathbb R^2}D_{r\tau S}(f_N,x)^2 dx\bigg) \ dr d  \tau .$$
The factor $1/N^3$ is there to almost normalize the measure while $d\tau$ is the normalized Haar measure on $SO(2)$.
\begin{proof} The proofs of \textit{(i)} and \textit{(ii)} are essentially identical so we will just prove \textit{(ii)}.
Using Plancherel's theorem we have
\begin{align*}
	D _S(f_N,2)^2 & =\frac{1}{N^3} \int_{\mathbb R^2} \Abs{\widehat{f_N}(\xi)}^2 \bigg( \int_{a N} ^{\beta N} r^2 \int_{SO(2)} \Abs{\widehat{d\sigma_S}(r \tau^{-1} \xi)}^2 d\tau dr \bigg) \ d\xi\\ 
	& = \frac{1}{N^3} \int_{\mathbb R^2} \Abs{\widehat{f_N}(\xi)}^2 \bigg(\int_{a |\xi| N} ^{\beta |\xi| N } \frac{r^2}{|\xi|^3} \int_{SO(2)} \Abs{\widehat{d\sigma_S}(r \tau e_\xi )}^2 d \tau dr\bigg)\  d\xi\\
	&\gtrsim \frac{1}{N^2}\int_{|\xi|<1}\Abs{\widehat{f_N}(\xi)}^2 \frac{1}{|\xi|^2} \bigg(\int_{a |\xi| N} ^{\beta |\xi| N }  \int_{SO(2)} \Abs{\widehat{d\sigma_S}(r \tau e_\xi )}^2 r d \tau  dr\bigg)\ d\xi \\
	&=\frac{1}{N^2}  \int_{|\xi|<1}\Abs{\widehat{f_N}(\xi)}^2  \frac{1}{|\xi|^2} \int_{\{ a|\xi|N<|y|<\beta |\xi|N \}}\Abs{\widehat{d\sigma_S}(y)}^2 dy\  d\xi\\ 
	&= \frac{1}{N^2} \int_{\{a|\xi|N< M\}} \Abs{\widehat{f_N}(\xi)}^2 \frac{1}{|\xi|^2}\bigg(  \int_{\{ a|\xi|N<|y|<\beta |\xi|N \}}\Abs{\widehat{d\sigma_S}(y)}^2 \ \bigg) d\xi \\ 
	& \quad +\frac{1}{N^2}\int_{\{M<a|\xi|N<a N\}} \Abs{\widehat{f_N}(\xi)}^2\frac{1}{|\xi|^2}   \int_{\{ a|\xi|N<|y|<\beta |\xi|N \}}\Abs{\widehat{d\sigma_S}(y)}^2 dy\  d\xi\eqdef I+II.
\end{align*}
Here we have set $e_\xi= \frac{\xi}{|\xi|}$. Using Lemma \ref{l.fourierKS} we get for $M>R_o$ and $\beta/a>A_o$, that
$$II\gtrsim \frac{1}{N^2} \int_{\{M<a|\xi|N<a N \}}  \Abs{\widehat{f_N}(\xi)}^2 \frac{N|S|}{|\xi|}d\xi\geq \frac{|S|}{N} \int_{\{\frac{M}{a N}< |\xi| <1 \}}  \Abs{\widehat{f_N}(\xi)}^2d\xi.$$

Now for small $\epsilon>0$ we write
\begin{align*}
	I& \geq \int_{ \{ \frac{\epsilon}{N}<|\xi|<\frac{M}{a N} \}}  \Abs{\widehat{f_N}(\xi)}^2  \int_{\{ a|\xi|N<|y|<\beta |\xi|N \}}\Abs{\widehat{d\sigma_S}(y)}^2 dy\  d\xi \\
	&\gtrsim_S \int_{ \{ \frac{\epsilon}{N}<|\xi|<\frac{M}{a N} \}}  \Abs{\widehat{f_N}(\xi)}^2  d\xi.
	\end{align*} 
The last estimate is justified since the region $\{a|\xi|N<|y|<\beta|\xi|N\}$ is an annulus inside $B (0,M\beta/a )$, of width at least $(\beta-a)\epsilon$, and $\widehat{d\sigma_S}(y )$ does not vanish identically on any annulus. Adding the estimates we conclude 

\begin{align*}
	D _S(f_N,2)^2 & \gtrsim_S \frac{1}{N }\int_{\{\frac{\epsilon}{N}<|\xi|<1\}} \Abs{\widehat{f_N}(\xi)}^2 d\xi 
	\geq \frac{1}{N}\bigg(\int_{[-\frac{1}{2},\frac{1}{2}]^2}\Abs{\widehat{f_N}(\xi)}^2 d\xi - \int_{|\xi|<\frac{\epsilon}{N}}\Abs{\widehat{f_N}(\xi)}^2 d\xi  \bigg).
\end{align*}
Now using the trivial bound $\|\widehat{f_N}\|_{L^\infty(\mathbb R^2)} ^2 \leq \| f_N\|_{L^1(\mathbb R ^2)} ^2=N^4$ and \eqref{e.1period} we get
\begin{align*}
	D _S(f_N,2)^2  \gtrsim \frac{1}{N}(N^2-N^4\frac{\epsilon^2}{N^2})\gtrsim  N,
\end{align*}
if $\epsilon$ is small enough.
\end{proof}

\begin{remark}
	By using the same ideas as in the proof of Theorem \ref{t.fullcircleN} we can show a stronger result in the special case of the Euclidean ball. In particular, we have that for every checkerboard coloring $f$ of the whole plane and every $t\geq 1$, there is a $x\in\mathbb R^2$ such that 
	$$ \mbox{either}\quad\bigg|\int_{B(x,t)} f(y)dy\bigg|\gtrsim \sqrt{t}\quad\mbox{or}\quad \bigg| \int_{B(x,2t)} f(y)dy\bigg|\gtrsim\sqrt{2t}.$$
\end{remark}
\begin{remark} The only limitation in the choice of the set $K$ and the curve $S$ come from Lemma \ref{l.fourierKS}. Going back to Montgomery's proof in \cite{M} one see that Lemma \ref{l.fourierKS} remains valid if $K$ is for example a multiply connected set and $S$ is replaced by $\partial K$. Furthermore, the $C^1$ condition of the boundary can be replaced by the weaker condition that the limit
	$$\lim_{\delta\to 0}\frac{|\{x\in\mathbb R^2: \textnormal{dist}(x,S)<\delta\}|}{\delta},$$
	exists and is finite.
\end{remark}

\begin{bibsection}
\begin{biblist}

\bib{HL}{book}{
   author={Hardy, G. H.},
   title={Divergent series},
   note={With a preface by J. E. Littlewood and a note by L. S. Bosanquet;
   Reprint of the revised (1963) edition},
   publisher={\'Editions Jacques Gabay},
   place={Sceaux},
   date={1992},
   pages={xvi+396},
   isbn={2-87647-131-0},
   review={\MR{1188874 (93g:01100)}},
}

\bib{IK}{article}{
   author={Iosevich, Alex},
   author={Kolountzakis, Mihail N.},
   title={The discrepancy of a needle on a checkerboard. II},
   journal={Unif. Distrib. Theory},
   volume={5},
   date={2010},
   number={2},
   pages={1--13},
   issn={1336-913X},
   review={\MR{2608014}},
}

\bib{K}{article}{
   author={Kolountzakis, Mihail N.},
   title={The discrepancy of a needle on a checkerboard},
   journal={Online J. Anal. Comb.},
   number={3},
   date={2008},
   pages={Art. 7, 5},
   issn={1931-3365},
   review={\MR{2375610 (2009d:11113)}},
}

\bib{M}{book}{
   author={Montgomery, Hugh L.},
   title={Ten lectures on the interface between analytic number theory and
   harmonic analysis},
   series={CBMS Regional Conference Series in Mathematics},
   volume={84},
   publisher={Published for the Conference Board of the Mathematical
   Sciences, Washington, DC},
   date={1994},
   pages={xiv+220},
   isbn={0-8218-0737-4},
   review={\MR{1297543 (96i:11002)}},
}

\bib{RO1}{article}{
   author={Rogers, Allen D.},
   title={Lower bounds on strip discrepancy for nonatomic colorings},
   journal={Monatsh. Math.},
   volume={130},
   date={2000},
   number={4},
   pages={311--328},
   issn={0026-9255},
   review={\MR{1785425 (2001g:11126)}},
   doi={10.1007/s006050070030},
}

\bib{RO2}{article}{
   author={Rogers, Allen D.},
   title={A functional from geometry with applications to discrepancy
   estimates and the Radon transform},
   journal={Trans. Amer. Math. Soc.},
   volume={341},
   date={1994},
   number={1},
   pages={275--313},
   issn={0002-9947},
   review={\MR{1169082 (94c:11068)}},
   doi={10.2307/2154623},
}

\bib{RO3}{article}{
   author={Rogers, A. D.},
   title={Irregularities of distribution with respect to strips},
   journal={Acta Math. Hungar.},
   volume={110},
   date={2006},
   number={1-2},
   pages={13--21},
   issn={0236-5294},
   review={\MR{2198411 (2006k:11150)}},
   doi={10.1007/s10474-006-0003-2},
}

\bib{W}{book}{
   author={Wolff, Thomas H.},
   title={Lectures on harmonic analysis},
   series={University Lecture Series},
   volume={29},
   note={With a foreword by Charles Fefferman and preface by Izabella \L
   aba;
   Edited by \L aba and Carol Shubin},
   publisher={American Mathematical Society},
   place={Providence, RI},
   date={2003},
   pages={x+137},
   isbn={0-8218-3449-5},
   review={\MR{2003254 (2004e:42002)}},
}

\end{biblist}
\end{bibsection}

\end{document}